\documentclass{amsart}
\usepackage{amsmath,amsfonts,amsthm,amssymb,amsrefs}
\usepackage{tikz}
\usetikzlibrary{arrows}

\newtheorem{theorem}{Theorem}

\newtheorem{lemma}{Lemma}
\newtheorem{prop}{Proposition}

\newtheorem{claim}{Claim}
\newtheorem{question}{Question}

\newcommand{\C}{\mathcal C}
\newcommand{\EC}{\rm EC}
\newcommand{\SO}{\rm SO}
\newcommand{\ESC}{\rm ESC}
\newcommand{\A}{\mathcal A}
\newcommand{\FL}{\mathcal F \mathcal F}
\newcommand{\FF}{\mathcal F\mathcal F}

\begin{document}
\title{Cylinder curves in finite holonomy flat metrics}
\date{}
\author[S-W.~Fu]{Ser-Wei Fu }
\author[C.J.~Leininger]{Christopher Leininger}

\address{S-W.~Fu, National Center for Theoretical Sciences, No. 1 Sec. 4 Roosevelt Rd., National Taiwan University, Taipei, 106, Taiwan, email: swfu@ncts.ntu.edu.tw}
\address{C.J.~Leininger, Department of Mathematics, Rice University, Houston, TX, email: cjl12@rice.edu}

\begin{abstract}
For an orientable surface of finite type equipped with a flat metric with holonomy of finite order $q$, the set of maximal embedded cylinders can be empty, non-empty, finite, or infinite. The case when $q\leq 2$ is well-studied as such surfaces are (semi-)translation surfaces. Not only is the set always infinite, the core curves form an infinite diameter subset of the curve complex. In this paper we focus on the case $q\geq 3$ and construct examples illustrating a range of behaviors for the embedded cylinder curves. We prove that if $q \geq 3$ and the surface is {\em fully punctured}, then the embedded cylinder curves form a finite diameter subset of the curve complex.  The same analysis shows that the embedded cylinder curves can only have infinite diameter when the metric has a very specific form.  Using this we characterize precisely when the embedded cylinder curves accumulate on a point in the Gromov boundary.
\end{abstract}

\maketitle

%%%%%%%%%%%%%%%%%%%%%%%%%%%%%%%%%%%%%%%%%%%%%%%%%%%%%%%%

\section{Introduction}\label{1:Intr}

We study the embedded cylinders in an orientable  surface $S$ of finite type, equipped with a {\em flat metric}: that is, a non-positively curved Euclidean cone metric, whose metric completion is also a Euclidean cone metric of finite area. By a \emph{cylinder} for a flat metric $\varphi$ on $S$, we mean an isometrically immersed open  Euclidean cylinder.  If the cylinder is embedded, then the (isotopy class of its) core curve is a simple closed curve in $S$ called an {\em embedded cylinder curve}.  The set of all embedded cylinder curves is denoted $\EC(\varphi)$, which we view as a subset of $\C(S)$, the {\em curve complex of $S$}; see Section \ref{2:Back}.
  
To describe the situation of interest for us, we recall that the {\em holonomy} of a flat metric $\varphi$ on $S$ is the image of the holonomy homomorphism $\pi_1(S^\circ) \to \SO(2)$ defined by parallel transport around loops on $S^\circ$, the surface obtained by removing all cone points of $\varphi$.   When the holonomy has order $q \leq 2$, the metric comes from a semi-translation structure, so every cylinder is embedded and a consequence of Masur's work \cite{Mas} is that $\EC(\varphi) \subset \C(S)$ has infinite diameter.
In this paper, we are interested in flat metrics with holonomy of finite order $q \geq 3$.

We construct a number of examples of flat metrics $\varphi$ for which the holonomy has order $q$, with $2 < q < \infty$, and for which $\EC(\varphi)$ exhibits a variety of behaviors.  Specifically, we construct examples of such metrics $\varphi$ on closed surfaces for which $\EC(\varphi)$ is empty, non-empty, finite, and infinite; see Section~\ref{3:Exam}.  Moreover, among the examples where $\EC(\varphi)$ is infinite, we find some for which $\EC(\varphi)$ has infinite diameter in $\C(S)$.
Despite this last family of examples, in Section~\ref{4:Main}, we show that when $\varphi$ is {\em fully punctured}, that is it has no cone points, then the diameter in the curve complex is {\em always} finite.

\begin{theorem} \label{T:main}
Let $\varphi$ be a fully punctured flat metric on a surface $S$ with finite holonomy of order at least $3$.  Then $\EC(\varphi) \subset \C(S)$ has finite diameter.
\end{theorem}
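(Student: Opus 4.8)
The plan is to pass to the canonical translation cover and analyze cylinder directions there, reducing any infinite-diameter phenomenon to a configuration of transverse flat foliations that the full-puncturedness hypothesis forbids.

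First I would pass to the cover $p\colon \hat S \to S$ of degree $q$ corresponding to the kernel of the holonomy homomorphism $\rho\colon \pi_1(S) \to \SO(2)$, whose image is cyclic of order $q$. The pulled-back metric $\hat\varphi$ has trivial holonomy, hence is a translation structure, and---crucially, because $\varphi$ is fully punctured---$\hat\varphi$ has no cone points in the interior of $\hat S$ either; all singularities of its completion lie over the punctures. The deck group is cyclic, generated by an isometry $g$ that rotates the global direction field of $\hat\varphi$ by $2\pi/q$. Since the core $\gamma$ of an embedded cylinder has trivial holonomy (its direction is preserved going around), it lifts to $q$ disjoint simple closed curves, the cores of $q$ straight cylinders of $\hat\varphi$ whose directions are $\theta, \theta + 2\pi/q, \dots, \theta + 2\pi(q-1)/q$ for some $\theta$.

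Next I would reduce the theorem to a statement about limiting laminations. Since $\C(S)$ is Gromov hyperbolic (Masur--Minsky), if $\EC(\varphi)$ had infinite diameter one could choose cores $\gamma_n$ with $d_{\C(S)}(\gamma_0,\gamma_n)\to\infty$ and, after passing to a subsequence converging in $\mathcal{PML}(S)$, produce a limiting $\varphi$-geodesic lamination $\lambda$. A non-filling limit would confine the $\gamma_n$ to a bounded neighborhood of the boundary of its supporting subsurface, contradicting unboundedness, so I may assume $\lambda$ is filling; the goal is then to rule this out. Lifting, $\hat\lambda = p^{-1}(\lambda)$ is a $g$-invariant geodesic lamination of $\hat\varphi$. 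I would argue that, because the $\gamma_n$ are cylinder cores, their lifts are pure-direction closed geodesics (cores of straight cylinders), so $\hat\lambda$ is carried by straight flat directions; as $\hat\varphi$ has no interior cone points, each minimal component is a single-direction straight foliation $\mathcal F_{\theta_\infty + 2\pi k/q}$. Its $g$-invariance forces all $q$ rotated directions to occur, and since distinct straight directions cannot coexist in a lamination without crossing, their supports are disjoint, $g$-permuted subsurfaces. This makes $\lambda$ supported on a proper subsurface on which the holonomy is trivial; filling would force that subsurface to be all of $S$, contradicting $q \ge 3$. Hence no filling limit exists and $\EC(\varphi)$ has finite diameter.

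The main obstacle, and the place where full-puncturedness does essential work, is precisely the step asserting that a diameter-realizing sequence of cylinder cores limits to a lamination carried by straight flat directions in $\hat\varphi$. The delicate point is the behavior near the punctures, whose cone angles exceed $2\pi$: one must show that the defining flat annulus around each core keeps the limit from degenerating into the cusps, so that $\hat\lambda$ genuinely consists of recurrent, pure-direction leaves to which the direction argument applies. This is exactly what distinguishes cylinder cores from arbitrary geodesics---so that the argument bounds the diameter of $\EC(\varphi)$ without over-reaching to bound the diameter of $\C(S)$---and I expect the heart of the proof to be a uniform control of the cylinder geometry (moduli, and the position of the cores relative to the thin and cusp regions) along the sequence. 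The same analysis should then isolate the obstruction in the presence of interior cone points: there, leaves of $\hat\lambda$ may turn at cone points and the $q$ directions can be amalgamated into a single $g$-invariant singular foliation, which is what permits infinite diameter and underlies the promised characterization.
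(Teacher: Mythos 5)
Your overall strategy coincides with the paper's: pass to a cover that (almost) trivializes the holonomy, lift a putative infinite-diameter sequence of cylinder cores, take limits, and observe that the deck group forces the limiting object to split into pieces foliated in at least two distinct directions which, in the absence of cone points, must be pairwise disjoint proper subsurfaces, contradicting the filling behaviour that unboundedness in $\C(S)$ would require. The packaging differs in minor ways: the paper uses the degree-$q_0$ holonomy \emph{almost} trivializing cover rather than the full degree-$q$ translation cover (note that in your cover the $q$ lifted directions coincide in pairs as unoriented directions when $q$ is even, so the disjointness argument must be run for the $q_0$ distinct unoriented directions --- still at least $2$ since $q\geq 3$); it takes Hausdorff limits of the actual flat geodesics rather than limits in $\mathcal{PML}(S)$; and it closes with Rafi--Schleimer's theorem that lifting to a finite cover quasi-isometrically embeds curve graphs, rather than with Klarreich's boundary theorem.

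The genuine gap is that the central step is asserted rather than proved, and you say as much (``I expect the heart of the proof to be\dots''). The claims that (i) each minimal piece of the lifted limit is a single-direction straight foliation and (ii) the supports of distinct directions are disjoint subsurfaces are precisely the content of the paper's Lemma~\ref{L:limits of cores}, which is the technical heart of the argument. The paper proves them by showing that the Hausdorff limit $Z^j$ of the closures of the lifted cylinders $A_n^j$ is a union of complete geodesics in the limiting direction $\theta^j$, and that because the $q_0$ lifts of a single \emph{embedded} cylinder are pairwise disjoint open cylinders in \emph{distinct} directions on a surface with no cone points, their closures --- and hence the limits $Z^j$ --- cannot meet: two geodesics in different directions that share a point must cross transversely when there is no cone point at which to turn, and disjoint cylinders cannot have crossing boundaries. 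This, and not any control of ``degeneration into the cusps,'' is where full-puncturedness does its work (your closing remark about leaves amalgamating at interior cone points shows you see the right phenomenon, but the proposal never supplies the disjointness argument). A secondary imprecision: convergence in $\mathcal{PML}(S)$ to a non-filling lamination does not by itself ``confine the $\gamma_n$ to a bounded neighborhood'' of anything; the statement you actually need --- that unboundedness in $\C(S)$ forces every $\mathcal{PML}$ accumulation point to be minimal and filling --- is true but is itself a theorem (Masur--Minsky/Klarreich) and, if invoked, should be cited rather than derived from the sketch given. Relating the support of the $\mathcal{PML}$ limit to the geometric (Hausdorff) limit of the flat geodesics is a further step your route would need and the Hausdorff-limit route avoids.
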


The ideas in the proof of this theorem can be used to show that all examples for which $\EC(\varphi)$ has infinite diameter must arise from the kind of construction we give in Section~\ref{S:slit construction} and mentioned in the previous paragraph; see Theorem~\ref{T:the only way}.  In fact, the examples in Theorem~\ref{T:6 flat construction} in Section~\ref{S:slit construction} and the Theorem~\ref{T:the only way} together characterize when the set can accumulate on the Gromov boundary.  The proof of Theorem~\ref{T:main} can also be modified to prove a similar result for the set of embedded saddle connections; see Theorem~\ref{T:ESC}.

Flat metrics with finite order holonomy arise naturally as a generalization of semi-translation surfaces and share a number of properties. For such metrics, the set of cylinder curves is an important tool in identifying the metric up to affine deformation, leading to rigidity results of the second author with Duchin and Rafi \cite{DLR} and Loving \cite{Loving}.  For semi-translation surfaces, combinatorial and geometric properties of cylinder curves and saddle connections, viewed as subsets of the curve graph and arc graph have recently been recently studied by Minsky-Taylor \cite{MinTay}, Nguyen \cite{Nguyen}, Tang-Webb \cite{TW}, Pan \cite{Pan} and Disarlo-Randecker-Tang \cite{DRT}, as well as forthcoming work of Tang \cite{TangPrep}.  The connection between flat metrics and the curve graph has its origin in the work of Masur and Minsky \cite{MM1} (see also Bowditch \cite{Bow}), while more generally, cylinders in flat metrics arose naturally in complex analysis via extremal problems (see Strebel \cite{Str}) and in dynamics of rational billiards via periodic billiard trajectories (see, for example, Masur \cite{Mas} and Boshernitzan-Galperin-Kr\"{u}ger-Troubetzkoy \cite{BGKT}).\\

%{\color{red} Maybe just make a final ``questions" section and move this there.}  Our result prompts the natural question of finding a quantitative bound on the diameter. Observations show that the bound has to depend on $q$, see Section~\ref{S:qBound}, but we expect the complexity of the surface to also play a role. A separate question that would require a different approach would be extending to flat metrics with infinite holonomy group.\\

\noindent \textbf{Acknowledgements.} The authors thank the Fields Institute for their hospitality during the thematic program on {\em Teichm\"{u}ller Theory and its Connections to Geometry, Topology and Dynamics} where the authors began contemplating this problem. We also thank the National Center of Theoretical Sciences and National Taiwan University for their support during the second author's visit to Taiwan.  They also thank Marissa Loving, Robert Tang, and Valentina Disarlo for interesting and helpful conversations related to this work.  Leininger was partially supported by NSF grants DMS 1811518 and DMS 1107452, 1107263, 1107367 ``RNMS: GEometric structures And Representation varieties" (the GEAR Network).  Finally, we would like to thank the anonymous referee for their careful reading of the first version of this paper, for pointing out the reference \cite{Vorobets} which led to the more general construction in Theorem~\ref{T:6 flat construction}, and for suggesting we include a precise characterization of when embedded cylinder curves limit to a point in the Gromov boundary.

%%%%%%%%%%%%%%%%%%%%%%%%%%%%%%%%%%%%%%%%%%%%%%%%%%%%%%%%

\section{Notation}\label{2:Back}

A Euclidean cone metric $\varphi$ on a surface $S$ is a metric which is locally isometric to the Euclidean plane away from a finite number of points.  At these points, the metric has a cone singularity with well-defined cone angle $> 0$.  The metric is non-positively curved (locally CAT(0)) if all cone points have cone angle greater than $2\pi$.  

The metric completion of a non-positively curved Euclidean cone metric may not even be a surface in general (e.g.~the metric completion of the universal cover of the Euclidean plane minus a point), but for the purposes of this paper, we will only consider those metrics where it is a surface, and is again a Euclidean cone metric of finite area.  We call such a metric a {\em flat metric} on $S$.

Given a flat metric $\varphi$ on $S$, the associated fully punctured flat metric is the restriction $\varphi^\circ$ of $\varphi$ to the surface $S^\circ$ obtained by removing all cone points of $\varphi$ from $S$.  The holonomy homomorphism $\pi_1(S^\circ) \to \SO(2)$ is obtained by parallel translating around loops based at some point of $S^\circ$, and the holonomy is defined to be the image.  When the order of the holonomy $q$ is $1$ or $2$, the flat metric comes from a translation or semi-translation surface.  The primary focus of this paper is the case where the holonomy has finite order $q$ {\em at least $3$}.  If the holonomy has order dividing $q$, we say that the metric is a {\em $q$--flat metric}.

Given $q \geq 1$, a meromorphic $q$--differential on a closed Riemann surface with poles of order at most $q-1$ determines a Euclidean cone metric with order of holonomy dividing $q$.  Puncturing at the poles (if any) determines a $q$--flat metric on the resulting surface.  Specifically, integrating a $q^{th}$ root of the differential away from the singularities determines a {\em preferred coordinate}, well defined up to translation and rotation through angles which are integral multiples of $\frac{2\pi}q$ (depending on the choice of $q^{th}$ root).  These coordinates therefore define a Euclidean metric on the complement of the zeros/poles with holonomy having order dividing $q$.  This metric extends over a zero of order $k \geq 1-q$ (a pole if $k < 0$) to produce cone singularities with cone angles $2\pi + \frac{2\pi k }q$ (c.f.~Bankovic \cite{Ban}).  We will not use this complex analytic perspective, but mention it for context.

The special case of $q =2$ arises naturally in Teichm\"uller theory.  Specifically, given a $2$--flat metric, we may choose an atlas of Euclidean coordinate charts for which the transition functions are translations, possibly composed with rotation through angle $\pi$.  The structure determined by such an atlas is sometimes called a {\em semi-translation structure}, and is equivalent to the data of a complex structure and integrable holomorphic quadratic differential (for a $1$--flat metric, we can take the transition functions to be translations only, and we get a {\em translation structure} or a holomorphic $1$--form).  The importance of this case in Teichm\"uller theory is that deforming the coordinates by the maps $g_t(x+iy) = e^tx + ie^{-t}y$, for any $t \in \mathbb R$, defines a new $2$--flat metric, and the 1-parameter family of such deformations is precisely a Teichm\"uller geodesic.  We also note that the holonomy condition allows for a parallel line field in any given direction, which in turn integrates to a measured singular foliation; see e.g.~\cite{Gardiner}.

We can also concretely describe $q$--flat metrics by gluing sides of finite number of compact Euclidean polygons by isometries that are compositions of translations and rotations through angles that are integral multiples of $\frac{2\pi}{q}$.  For example, consider the equilateral triangle on the right in Figure~\ref{NoCyl12gon} with each side subdivided into four equal length arcs, which we view as a 12-gon where some of the interior angles are equal to $\pi$.  The pairing of these sides of this 12-gon described by the labels in the figure can be realized by gluings given by translations and rotations through angles $\frac{k\pi}{3} = \frac{2 k \pi}{6}$, for $k \in \mathbb Z$.  The result is a genus $3$ surface with a flat metric having holonomy of order $6$, and a single cone point of cone angle $10\pi$. 

Given a fully punctured flat metric $\varphi$ on $S$ with finite holonomy of order $q$ the {\em holonomy almost trivializing cover} $p_0 \colon (S_0,\varphi_0) \to (S,\varphi)$ is the locally isometric cover corresponding to the preimage of the subgroup $\{\pm I\}< \SO(2)$ (with the pull back metric $\varphi_0 = p^*\varphi$); see Loving \cite{Loving}.  The cover $p$ is a cyclic cover of degree $q_0 = q$ if $q$ is odd and $q_0 = \frac{q}2$ if $q$ is even.  
If $\varphi$ is not fully punctured, this cover still makes sense as a branched covering, branched over (some of) the cone points.   If we build the flat metric by gluing polygons, we can construct the holonomy almost trivializing cover from $q_0$ rotated copies of the polygons. See Figure~\ref{NoCyl12gon} on the left for the example of the cover associated to the genus $3$ surface described above (and illustrated on the right).

\vspace{.3cm}

\noindent {\bf Remark.} The holonomy almost trivializing cover is non-trivial if and only if $q\geq 3$, which is an obvious, yet key point in our proof of Theorem~\ref{T:main}. %new

\vspace{.3cm}

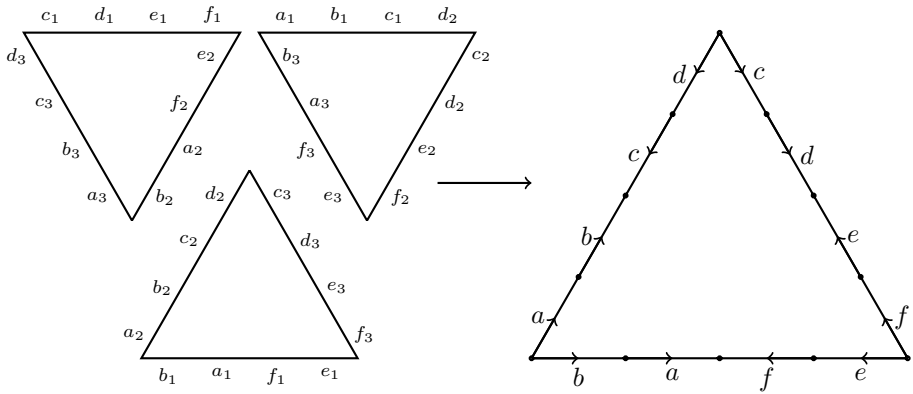
\begin{figure}[ht]
\begin{tikzpicture}[scale=1.25]

\draw[thick] (-4.5,1.134) -- node[left]{\scriptsize $d_2$} (-4.7875,0.634) -- node[left]{\scriptsize $c_2$} (-5.075,0.134) -- node[left]{\scriptsize $b_2$} (-5.3625,-0.366) -- node[left]{\scriptsize $a_2$} (-5.65,-0.866) -- node[below]{\scriptsize $b_1$} (-5.075,-0.866) -- node[below]{\scriptsize $a_1$} (-4.5,-0.866) -- node[below]{\scriptsize $f_1$} (-3.925,-0.866) -- node[below]{\scriptsize $e_1$} (-3.35,-0.866) -- node[right]{\scriptsize $f_3$} (-3.6375,-0.366) -- node[right]{\scriptsize $e_3$} (-3.925,0.134) -- node[right]{\scriptsize $d_3$} (-4.2125,0.634) -- node[right]{\scriptsize $c_3$} (-4.5,1.134);
\draw[thick] (-5.75,0.6) -- node[left]{\scriptsize $a_3$} (-6.0375,1.1) -- node[left]{\scriptsize $b_3$} (-6.325,1.6) -- node[left]{\scriptsize $c_3$} (-6.6125,2.1) -- node[left]{\scriptsize $d_3$} (-6.9,2.6) -- node[above]{\scriptsize $c_1$} (-6.325,2.6) -- node[above]{\scriptsize $d_1$} (-5.75,2.6) -- node[above]{\scriptsize $e_1$} (-5.175,2.6) -- node[above]{\scriptsize $f_1$} (-4.6,2.6) -- node[left]{\scriptsize $e_2$} (-4.8875,2.1) -- node[left]{\scriptsize $f_2$} (-5.175,1.6) -- node[right]{\scriptsize $a_2$} (-5.4625,1.1) -- node[right]{\scriptsize $b_2$} (-5.75,0.6);
\draw[thick] (-3.25,0.6) -- node[left]{\scriptsize $e_3$} (-3.5375,1.1) -- node[left]{\scriptsize $f_3$} (-3.825,1.6) -- node[right]{\scriptsize $a_3$} (-4.1125,2.1) -- node[right]{\scriptsize $b_3$} (-4.4,2.6) -- node[above]{\scriptsize $a_1$} (-3.825,2.6) -- node[above]{\scriptsize $b_1$} (-3.25,2.6) -- node[above]{\scriptsize $c_1$} (-2.675,2.6) -- node[above]{\scriptsize $d_2$} (-2.1,2.6) -- node[right]{\scriptsize $c_2$} (-2.3875,2.1) -- node[right]{\scriptsize $d_2$} (-2.675,1.6) -- node[right]{\scriptsize $e_2$} (-2.9625,1.1)-- node[right]{\scriptsize $f_2$} (-3.25,0.6);
\draw[thick,->] (-2.5,1) -- (-1.5,1);

\draw[thick] (-1.5,-0.866) circle (0.02) -- (-1,0) circle (0.02) -- (-0.5,0.866) circle (0.02) -- (0,1.732) circle (0.02) -- (0.5,2.598) circle (0.02) -- (1,1.732) circle (0.02) -- (1.5,0.866) circle (0.02) -- (2,0) circle (0.02) -- (2.5,-0.866) circle (0.02) -- (1.5,-0.866) circle (0.02) -- (0.5,-0.866) circle (0.02) -- (-0.5,-0.866) circle (0.02) -- (-1.5,-0.866);

\draw[thick,->] (-1.5,-0.866) -- (-1.25,-0.433) node[left]{$a$};
\draw[thick,->] (-1,0) -- (-0.75,0.433) node[left]{$b$};
\draw[thick,->] (-0.5,-0.866) -- (0,-0.866) node[below]{$a$};
\draw[thick,->] (-1.5,-0.866) -- (-1,-0.866) node[below]{$b$};

\draw[thick,->] (0.5,2.598) -- (0.75,2.162) node[right]{$c$};
\draw[thick,->] (1,1.732) -- (1.25,1.299) node[right]{$d$};
\draw[thick,->] (0,1.732) -- (-0.25,1.299) node[left]{$c$};
\draw[thick,->] (0.5,2.598) -- (0.25,2.162) node[left]{$d$};

\draw[thick,->] (2.5,-0.866) -- (2,-0.866) node[below]{$e$};
\draw[thick,->] (1.5,-0.866) -- (1,-0.866) node[below]{$f$};
\draw[thick,->] (2,0) -- (1.75,0.433) node[right]{$e$};
\draw[thick,->] (2.5,-0.866) -- (2.25,-0.433) node[right]{$f$};

\end{tikzpicture}
\caption{A flat metric with holonomy of order $6$ and the (degree $3$) holonomy almost trivializing cover.}
\label{NoCyl12gon}
\end{figure}

If a closed curve $\gamma$ on a $q$-flat metric has a geodesic representative that does not contain any cone points, then $\gamma$ is called a {\em cylinder curve} and lies inside a unique maximal immersed open Euclidean cylinder, or just a {\em maximal cylinder}. Combining Masur's result \cite{Mas} with the holonomy almost trivializing cover, every $q$-flat metric has infinitely many homotopy classes of cylinder curves. In this paper we are focused on the homotopy classes of embedded cylinder curves, or the set of embedded maximal cylinders of a $q$-flat metric $\varphi$ denoted by $\EC(\varphi)$. 

Any geodesic in a $q$-flat metric that connects two punctures/cone points (not necessarily distinct) with interior disjoint from the cone points is called a {\em saddle connection}. We use $\ESC(\varphi)$ to denote the set of saddle connections with embedded interior in $\varphi$.

The curve graph $\mathcal{C}(S)$ and the arc graph $\A(S)$ are simplicial graphs whose vertices correspond to isotopy classes of essential simple closed curves and properly embedded arcs, respectively.  In each case, vertices are joined by an edge if the isotopy classes admit disjoint representatives.
The ``endpoints'' of arcs are punctures, hence any saddle connection of a fully punctured surface is an arc. %new
We view $\EC(\varphi) \subset \C(S)$ and $\ESC(\varphi) \subset \A(S)$ (the latter when $\varphi$ is fully punctured), as sets of vertices in the respective graphs.  The diameter of such a subset is the supremum of edge-path distances between any two points in it.

%%%%%%%%%%%%%%%%%%%%%%%%%%%%%%%%%%%%%%%%%%%%%%%%%%%%%%%%

\section{Examples}\label{3:Exam}

In this section we provide several examples of $q$-flat metrics whose cylinder sets exhibit a range of behaviors.

\subsection{Finitely many embedded cylinders.}

The following fact from classical Euclidean geometry proves quite useful in limiting embedded cylinders in a flat metric.

\begin{prop}\label{Classical}
Let $A$ and $B$ be distinct points in $\mathbb{R}^2$. The set 
\[
I_{AB}(\theta) = \{ C\in\mathbb{R}^2 \mid \angle ACB=\theta \},
\]
for some fixed $0 < \theta < \pi$, is the two circular arcs of central angle $2(\pi-\theta)$ connecting $A$ and $B$. 
\end{prop}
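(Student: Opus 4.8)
The plan is to recognize this as a packaging of the classical inscribed angle theorem and to prove the two inclusions separately, after first identifying the relevant circles explicitly. To pin down the circles, note that any circle passing through $A$ and $B$ has its center $O$ on the perpendicular bisector of the segment $AB$, and if a point $C$ on such a circle is to satisfy $\angle ACB = \theta$, then the inscribed angle relation forces the central angle $\angle AOB$ subtending the arc $AB$ \emph{not} containing $C$ to equal $2\theta$. The chord-length identity $|AB| = 2r\sin\theta$ (where $r = |OA|$) then determines the radius uniquely as $r = |AB|/(2\sin\theta)$, and the requirement $0 < \theta < \pi$ guarantees $\sin\theta > 0$ so that $r$ is well defined and positive. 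Placing $O$ on either of the two sides of the line through $A$ and $B$ produces exactly two such circles, interchanged by reflection across that line, and in each the arc \emph{containing} $C$ has complementary central angle $2\pi - 2\theta = 2(\pi-\theta)$; these are the two arcs in the statement.

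For the forward inclusion, that every point of such an arc lies in $I_{AB}(\theta)$, I would run the standard inscribed angle computation. Joining $C$ to the center $O$ decomposes $\angle ACB$ via the two isosceles triangles $OAC$ and $OBC$, each with two sides equal to $r$; applying the exterior angle identity at $O$ to each triangle and adding (or subtracting, according to whether $O$ lies inside or outside triangle $ACB$) yields $\angle ACB = \tfrac12 \angle AOB = \theta$, a value independent of the position of $C$ along the arc. The only subtlety here is the case distinction governing whether the two contributions add or subtract, which is routine once one tracks the position of $O$ relative to the chords.

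For the converse, that no point off these two arcs can satisfy $\angle ACB = \theta$, I would argue by a monotonicity principle derived from the exterior angle inequality. Given a candidate point $C'$ with $\angle AC'B = \theta$ lying, say, on the same side of line $AB$ as one of the two arcs, the ray from $A$ through $C'$ meets that circle at a point $C$ on the arc; comparing the triangle $AC'B$ with the inscribed configuration shows $\angle AC'B > \theta$ if $C'$ lies strictly inside the corresponding disk and $\angle AC'B < \theta$ if it lies strictly outside. Thus $\angle AC'B = \theta$ can hold only on the arcs themselves, and the two arcs (one on each side of $AB$) exhaust the locus. I expect the main obstacle to be making this converse monotonicity fully rigorous, since it requires a careful comparison of angles across the circle together with attention to the various positions $C'$ may occupy relative to the chord $AB$ and to the two disks; the forward direction, by contrast, is a direct application of well-known Euclidean facts.
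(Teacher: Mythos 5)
The paper states Proposition~\ref{Classical} purely as a fact from classical Euclidean geometry and supplies no proof, so there is no argument of the authors' to measure yours against; any correct derivation of the inscribed angle theorem and its converse does the job. Your outline is the standard such derivation and is essentially correct: the two candidate circles are pinned down by the chord-length relation $|AB| = 2r\sin\theta$, the forward inclusion follows from the isosceles-triangle/exterior-angle decomposition of $\angle ACB$ at the center (with the usual add-or-subtract case depending on the position of $O$), and the converse is a monotonicity statement. One harmless degeneracy worth noting: when $\theta = \pi/2$ the two centers coincide at the midpoint of $AB$, the two circles are one circle, and the two arcs are its two semicircles.

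The one step that does not quite go through as written is the ``outside'' half of the converse. If $C'$ lies outside the closed disk, the ray from $A$ through $C'$ need not meet the arc a second time: for $C$ on the arc, $\angle CAB$ ranges over $(0,\pi-\theta)$, so when $\angle C'AB \geq \pi-\theta$ the ray misses the arc entirely and there is no inscribed point $C$ to compare against. That case must be handled directly, e.g.\ by the angle sum in triangle $AC'B$, which gives $\angle AC'B = \pi - \angle C'AB - \angle ABC' < \pi - (\pi-\theta) = \theta$. When the ray does meet the arc at a second point $C$, your exterior-angle comparison is exactly right: $C'$ strictly between $A$ and $C$ (equivalently, strictly inside the disk) forces $\angle AC'B > \theta$, and $C'$ beyond $C$ forces $\angle AC'B < \theta$. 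If you prefer to avoid the case analysis altogether, there is a cleaner converse: given $C'$ off the line $AB$ with $\angle AC'B = \theta$, apply the already-proved inscribed angle theorem to the circumcircle of $A$, $B$, $C'$; this forces that circle's radius to equal $|AB|/(2\sin\theta)$ and its center to sit at the determined point of the perpendicular bisector on the appropriate side, so the circumcircle is one of your two circles and $C'$ lies on the corresponding arc.
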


Proposition~\ref{Classical} will be used to determine whether a geodesic has a self-intersection with a prescribed intersection angle; see the next two constructions.
%The idea is that if the region inside $I_{AB}(\theta)$ can be isometrically immersed into the surface without containing any cone points or punctures, then the self-intersection would occur. Figure~\ref{Octagon:Semicircle} shows the case of $\theta=\frac{\pi}{2}$ where a geodesic has a self-intersection on a semicircle. We use the proposition again in Section~\ref{S:Cyl-Free} to construct cylinder-free regions.

%%%%%%%%%%%%%%%%%%%%%%%%%%%%%%%%%%%%%%%%%%%%%%%%%%%%%%%%

\subsubsection{Octagon} \label{S:octagon}

The first example is a $4$--flat metric $\varphi$ on a closed genus $2$ surface $S$.  Starting with a regular octagon $P$, we identify sides by isometry as indicated by the labeling in Figure~\ref{Octagon}.  We note that this is the picture of a genus $2$ surface commonly found in a first course in topology, as opposed to the ``usual octagon example'' often found in studying translation structures (where one identifies opposite sides).
As with the translation example, this $4$--flat metric has a single cone point with cone angle $6\pi$.

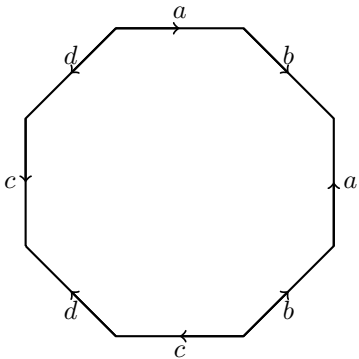
\begin{figure}[ht]
\begin{tikzpicture}[scale=1.2]

\draw[thick] (-2,-1) -- (-2,0.414) -- (-1,1.414) -- (0.414,1.414) -- (1.414,0.414) -- (1.414,-1) -- (0.414,-2) -- (-1,-2) -- (-2,-1);
\draw[thick,->] (-2,0.414) -- (-2,-0.293) node[left]{$c$};
\draw[thick,->] (0.414,-2) -- (-0.293,-2) node[below]{$c$};
\draw[thick,->] (1.414,-1) -- (1.414,-0.293) node[right]{$a$};
\draw[thick,->] (-1,1.414) -- (-0.293,1.414) node[above]{$a$};
\draw[thick,->] (0.414,1.414) -- (0.914,0.914) node[above]{$b$};
\draw[thick,->] (0.414,-2) -- (0.914,-1.5) node[below]{$b$};
\draw[thick,->] (-1,-2) -- (-1.5,-1.5) node[below]{$d$};
\draw[thick,->] (-1,1.414) -- (-1.5,0.914) node[above]{$d$};

\end{tikzpicture}
\caption{A 4-flat metric on a genus $2$ surface.}
\label{Octagon}
\end{figure}

The metric $\varphi$ has three embedded cylinders, which are shown in Figure~\ref{Octagon:Cyl}.  In fact, these are the only maximal embedded cylinders.
\begin{prop} \label{P:only 3} The flat metric $\varphi$ above has exactly three maximal cylinders.  That is, $\EC(\varphi)$ consists of exactly three curves.
\end{prop}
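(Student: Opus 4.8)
The plan is to prove the two inequalities $|\EC(\varphi)|\geq 3$ and $|\EC(\varphi)|\leq 3$ separately. For the lower bound I would simply verify that each of the three cylinders drawn in Figure~\ref{Octagon:Cyl} really is a maximal embedded cylinder: develop it into $\mathbb R^2$ as a Euclidean rectangle, check that its core is a closed geodesic meeting no vertex of $P$, that the rectangle embeds in $S$, and that each of its two boundary circles passes through the (unique) cone point $v$, so that it cannot be widened. One then checks that the three core curves are pairwise non-isotopic. This part is routine and gives $|\EC(\varphi)|\geq 3$.

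For the upper bound, let $C$ be an arbitrary maximal embedded cylinder with core geodesic $\gamma$. As an isometrically embedded flat cylinder, $C$ is isometric to $(\mathbb R/c\mathbb Z)\times(0,w)$, so the holonomy around $\gamma$ is trivial and $\gamma$ develops to a straight segment; developing all of $C$ realizes it as an embedded open strip $R\subset\mathbb R^2$ of width $w$, bounded by two parallel lines $\ell_+,\ell_-$. Because $\varphi$ has a single cone point $v$, namely the common image of the eight vertices of $P$, with cone angle $8\cdot\tfrac{3\pi}{4}=6\pi$, and because $C$ is maximal, each boundary circle of $C$ passes through $v$; hence each of $\ell_+,\ell_-$ carries a preimage of $v$, and $\partial C$ is a concatenation of saddle connections running from $v$ to $v$.

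The heart of the argument, and the step I expect to be the main obstacle, is to bound the geometry of $R$ using Proposition~\ref{Classical}. Since every boundary saddle connection starts and ends at the \emph{single} cone point $v$, a saddle connection on $\ell_+$ together with the chord of $R$ joining its endpoint to a nearby preimage of $v$ on $\ell_-$ subtends at $v$ an angle determined by $w$ and the developing map, while the total angular room available at $v$ is only $6\pi$. I would use Proposition~\ref{Classical} to show that once $w$, or the number of times $\gamma$ wraps, exceeds an explicit bound, two of these developed rays from $v$ are forced to cross; such a crossing means either that $\partial C$ fails to be embedded, contradicting that $C$ is an embedded cylinder, or that a preimage of $v$ lands in the interior of $R$, contradicting that $\gamma$ avoids the cone point. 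This reduces the possibilities to finitely many combinatorial types of strip, built from the finitely many short saddle connections in $P$, which I would then enumerate and compare with the three cylinders from the lower bound, concluding that $\EC(\varphi)$ consists of exactly three curves. The real difficulty is precisely this bookkeeping: tracking, for each admissible direction, which preimages of $v$ land on $\ell_\pm$ and applying Proposition~\ref{Classical} with the correct subtended angle so as to eliminate every cylinder other than the three already exhibited.
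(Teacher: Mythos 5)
Your lower bound is fine and matches what the paper takes for granted: one checks directly that the three cylinders of Figure~\ref{Octagon:Cyl} are embedded, maximal, and pairwise non-isotopic. The upper bound, however, has a genuine gap: the step you yourself flag as ``the main obstacle'' is exactly the step that is missing, and the mechanism you propose for it cannot work as stated. Your plan is to develop a maximal cylinder to a strip, note that both boundary lines carry preimages of the unique cone point $v$ of cone angle $6\pi$, and then argue that since ``the total angular room available at $v$ is only $6\pi$,'' a sufficiently wide or long strip forces two developed rays from $v$ to cross. But the cone angle at $v$ by itself cannot yield finiteness: the standard octagon translation surface (opposite sides identified) has the identical single cone point of cone angle $6\pi$, every maximal cylinder there likewise has both boundary circles passing through $v$, and yet it has infinitely many embedded cylinders in infinitely many directions. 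Any bound extracted purely from the $6\pi$ of angular room at $v$ together with the width and wrapping number of the strip would apply equally to that surface and prove something false. The finiteness in this example must come from the specific side pairing, and your outline never actually uses it; the reduction to ``finitely many combinatorial types'' is asserted, not established.

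The paper's argument is different and uses the gluing essentially. It cuts the core geodesic $\gamma$ into segments $\gamma_1,\dots,\gamma_k$ crossing the octagon $P$ and shows: (i) if some $\gamma_i$ meets a side at a point $x$ with angle at least $\pi/4$, then, letting $y$ be the point identified with $x$ on the paired side, one of the two arcs of $I_{xy}(\frac{\pi}{2})$ from Proposition~\ref{Classical} is a semicircle entirely contained in $P$, and $\gamma_i$ and the continuing segment $\gamma_{i+1}$ must meet on it, so $\gamma$ self-intersects; (ii) any chord of $P$ joining non-adjacent sides makes angle at least $\pi/4$ with one of them, so an embedded $\gamma$ only ever cuts corners between adjacent sides; (iii) each such corner triangle lies in exactly one of the three exhibited cylinders and forces $\gamma_i$ to make angle strictly less than $\pi/8$ with that cylinder's core, while intersecting cylinders have cores meeting at angle $\pi/4$, so $\gamma$ cannot pass from one cylinder to another and must be a core curve. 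Step (i) depends on which side $x$ is glued to---that is precisely where this example diverges from the translation-surface octagon. To salvage your developing-map approach you would need an input of this kind relating the two boundary lines of the strip via the actual identifications; without it the key step does not go through.
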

%{\color{red}[[I wrote a minor variant on the proof because I couldn't following the logic in the $\frac{\pi}8$ argument.]}
\begin{proof}  Suppose $\gamma$ is the core of some cylinder on $(S,\varphi)$, and is thus a closed geodesic on $(S,\varphi)$ containing no cone points.  Our goal is to show that $\gamma$ (and hence the cylinder) can only be embedded if it is one of the three cylinders already described.  To this end, observe that $\gamma$ cuts through $P$ in a collection of straight line segments, $\gamma_1,\ldots,\gamma_k$, which identify to $\gamma$ upon gluing the paired sides.  The proof reduces to an argument about the angles that these segments make with the sides, and in referring to these angles, we will always mean the angle in $(0,\frac{\pi}2]$.

First, we claim that if one of the segments $\gamma_i$ meets a side at a point $x$ making angle at least $\frac{\pi}4$, then $\gamma$ will have a self-intersection.  To see this, let $y$ be the point identified to $x$ along the identified side, and let $\gamma_{i+1}$ be the segment meeting the side at the point $y$.  By inspection, one of the semi-circles in $I_{xy}(\frac{\pi}2)$ is entirely contained in $P$, and the segments $\gamma_i,\gamma_{i+1}$ intersect at a point of $I_{xy}(\frac{\pi}2)$; see Figure~\ref{Octagon:Semicircle}.  It follows that $\gamma$ has a self-intersection, and so it is not embedded.

Next, we observe that any line segment that connects non-adjacent sides of $P$ makes angle at least $\frac{\pi}4$ at the point of intersection with at least one of the sides.  Therefore, by the previous paragraph, if $\gamma$ is embedded it must be that each $\gamma_i$ connects adjacent sides of $P$.  Each pair of adjacent sides determines a triangle in $P$ as in Figure~\ref{Octagon:Cyl} which is contained in exactly one of the three given embedded cylinders, and each segment $\gamma_i$ must therefore project into a unique cylinder and makes an angle strictly less than $\frac{\pi}8$ with the core geodesics of that cylinder.  If $\gamma$ is entirely contained in one of the cylinders, then it must be a core geodesic of that cylinder and we are done.  Otherwise, there must be points of $\gamma$ which are contained in two of these cylinders.  However, intersecting cylinders have core geodesics that meet at angle $\frac{\pi}4$, and so when the switch happens, the angle with the core geodesic must switch from being strictly less than $\frac{\pi}8$ to being strictly greater than $\frac{\pi}8$, a contradiction.
\end{proof}

%Using symmetry, we consider geodesics away from the cone point via line segments that has one endpoint on a fixed edge. Each line segment can be extended in both directions to get the geodesic line and our proof shows how the angle at the endpoint determines how far the line can go before a self-intersection occurs. 

\begin{figure}[ht]
\begin{tikzpicture}[scale=0.8]

\draw[thick] (-2,-1) -- (-2,0.414) -- (-1,1.414) -- (0.414,1.414) -- (1.414,0.414) -- (1.414,-1) -- (0.414,-2) -- (-1,-2) -- (-2,-1);
\draw[fill,red] (-1,1.414) -- (0.414,1.414) -- (1.414,0.414) -- (-1,1.414);
\draw[fill,red] (1.414,0.414) -- (1.414,-1) -- (0.414,-2) -- (1.414,0.414);

\draw[thick] (3,-1) -- (3,0.414) -- (4,1.414) -- (5.414,1.414) -- (6.414,0.414) -- (6.414,-1) -- (5.414,-2) -- (4,-2) -- (3,-1);
\draw[fill,magenta] (4,1.414) -- (3,0.414) -- (3,-1) -- (4,1.414);
\draw[fill,magenta] (3,-1) -- (4,-2) -- (5.414,-2) -- (3,-1);

\draw[thick] (8,-1) -- (8,0.414) -- (9,1.414) -- (10.414,1.414) -- (11.414,0.414) -- (11.414,-1) -- (10.414,-2) -- (9,-2) -- (8,-1);
\draw[fill,blue] (10.414,1.414) -- (8,0.414) -- (9,1.414) -- (10.414,1.414);
\draw[fill,blue] (8,0.414) -- (8,-1) -- (9,-2) -- (8,0.414);
\draw[fill,blue] (10.414,1.414) -- (11.414,0.414) -- (11.414,-1) -- (10.414,1.414);
\draw[fill,blue] (9,-2) -- (10.414,-2) -- (11.414,-1) -- (9,-2);

\end{tikzpicture}
\caption{Cylinders in the octagon example.}
\label{Octagon:Cyl}
\end{figure}
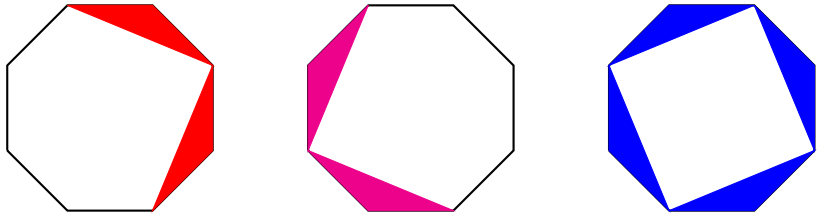

%Any line that passes through an edge with an angle at least $\frac{\pi}{4}$ will also have a self-intersection on a semicircle via Proposition~\ref{Classical}, see Figure~\ref{Octagon:Semicircle}. However, in the regular octagon case there are lines that stay inside a cylinder for a long time before they leave the cylinder. In fact, any line segment connecting one edge to another either has an endpoint with at least $\frac{\pi}{4}$ angle or lies inside one of the three cylinders. 

\begin{figure}[ht]
\begin{tikzpicture}[scale=1.0]

\draw[fill,yellow] (-1,-2) -- (0.414,-2) -- (-2,0.414) -- (-2,-1);
\draw[fill,yellow] (0.414,-2) arc (-45:135:1.707);
\draw[fill,white] (-1,-2) arc (-45:135:0.707);

\draw[thick] (-2,-1) -- (-2,0.414) -- (-1,1.414) -- (0.414,1.414) -- (1.414,0.414) -- (1.414,-1) -- (0.414,-2) -- (-1,-2) -- (-2,-1);
\draw[thick,->] (-2,0.414) -- (-2,-0.293) node[left]{$c$};
\draw[thick,->] (0.414,-2) -- (-0.293,-2) node[below]{$c$};
\draw[thick,->] (1.414,-1) -- (1.414,-0.293) node[right]{$a$};
\draw[thick,->] (-1,1.414) -- (-0.293,1.414) node[above]{$a$};
\draw[thick,->] (0.414,1.414) -- (0.914,0.914) node[above]{$b$};
\draw[thick,->] (0.414,-2) -- (0.914,-1.5) node[below]{$b$};
\draw[thick,->] (-1,-2) -- (-1.5,-1.5) node[below]{$d$};
\draw[thick,->] (-1,1.414) -- (-1.5,0.914) node[above]{$d$};

\draw[blue] (-1,-2) arc (-45:135:0.707);
\draw[red] (0.2,-2) arc (-45:135:1.54);
\draw[blue] (0.414,-2) arc (-45:135:1.707);

\draw[red,thick] (0.2,-2) -- (0.74,0.7);
\draw[red,thick] (-2,0.2) -- (0.97,-0.394);

\filldraw (.2,-2) circle[radius=.04];
\node at (0.2,-2.2) {$x$};
\node at (.95,.4) {$\gamma_i$};

\filldraw (-2,0.2) circle[radius=.04];
\node at (-2.2,0.2) {$y$};
\node at (-.3,.1) {$\gamma_j$};

\end{tikzpicture}
\caption{Self-intersections occur inside the shaded region when the angle with edge $c$ is at least $\frac{\pi}{4}$.}
\label{Octagon:Semicircle}
\end{figure}
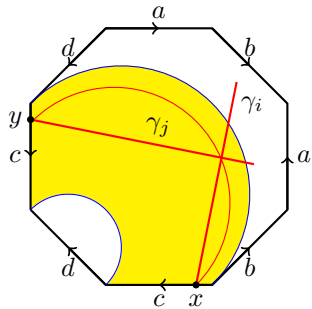

%Consider a line segment that lies entirely inside a cylinder in the octagon. Unless the line segment is parallel to the cylinder or hits a cone point eventually, it will exit the cylinder eventually. If we consider the angle between the line segment and the corresponding cylinder curve direction, then the angle is less than $\frac{\pi}{8}$. This observation guarantees that the line segment exiting the cylinder will not be a line segment contained in one of the three cylinders because the cylinders meet each other at angle $\frac{\pi}{4}$. As previously observed, the exiting line segment, which is not lying inside a cylinder, must meet the next edge at an angle at least $\frac{\pi}{4}$. Therefore a long simple geodesic can only occur inside one of the three cylinders and a self-intersection occur as soon as the geodesic exits the cylinder and passes through another edge. 

%Hence we conclude that $EC(\varphi)$ is exactly the set of three curves. The set $ESC(\varphi)$ can only be considered if we puncture the cone point, resulting in a fully punctured 4-flat metric. The cardinality of $ESC(\varphi)$ is infinite in this case since there exist infinitely many embedded saddle connections in an embedded cylinder. However, the set of all saddle connections inside a fixed embedded cylinder has diameter 2 since they are all adjacent to a saddle connection outside.

A similar argument can be used to prove that every regular $4g$-gon with the same gluing pattern per four consecutive edges is a $2g$--flat metric that has exactly $g+1$ embedded cylinder curves. 

%%%%%%%%%%%%%%%%%%%%%%%%%%%%%%%%%%%%%%%%%%%%%%%%%%%%%%%%

\subsubsection{Cylinder-free region} \label{S:Cyl-Free}

Here we construct ``building blocks" that can be used to construct {\em cylinder-free regions}: subsets of a surface with a flat metric which are disjoint from all embedded cylinders.
For example, the part of the octagon that remains after removing the three cylinders is the cylinder-free region in the example from Section~\ref{S:octagon}.

Consider the non-positively curved Euclidean cone surface $\mathcal B$ (with boundary) illustrated in Figure~\ref{NoCylBlock2}. The surface $\mathcal B$ is constructed from an equilateral triangle, which will ultimately be the cylinder-free part, union with a \emph{buffer} region bounded by the circular arc of central angle $\frac{4\pi}{3}$ tangent to two sides of the triangle at their endpoints.  The two sides of the triangle on the boundary of the region are subdivided and identified as indicated by isometries (which are rotations about some point through cone angle $\pm \frac{\pi}3$).  We call $\mathcal B$ a {\em building block}.

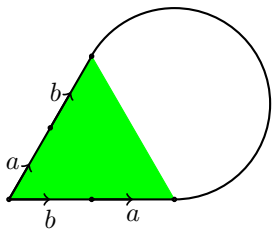
\begin{figure}[ht]
\begin{tikzpicture}[scale=1.1]

\draw[fill,green] (-1.5,-0.866) -- (-0.5,0.866) -- (0.5,-0.866);
\draw[thick] (-1.5,-0.866) circle (0.02) -- (-1,0) circle (0.02) -- (-0.5,0.866) circle (0.02);
\draw[thick] (0.5,-0.866) circle (0.02) -- (-0.5,-0.866) circle (0.02) -- (-1.5,-0.866);
\draw[thick,->] (-1.5,-0.866) -- (-1.25,-0.433) node[left]{$a$};
\draw[thick,->] (-1,0) -- (-0.75,0.433) node[left]{$b$};
\draw[thick,->] (-0.5,-0.866) -- (0,-0.866) node[below]{$a$};
\draw[thick,->] (-1.5,-0.866) -- (-1,-0.866) node[below]{$b$};
\draw[thick] (0.5,-0.866) arc (-90:150:1.157);

\end{tikzpicture}
\caption{The building block $\mathcal B$ with a triangular cylinder-free region.}
\label{NoCylBlock2}
\end{figure}

\begin{prop} Suppose the building block $\mathcal B$ is locally isometrically immersed in a flat surface $(S,\varphi)$.  Then the image of the triangular region is part of the cylinder-free region of $(S,\varphi)$.
\end{prop}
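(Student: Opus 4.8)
The plan is to argue by contradiction. Suppose some embedded cylinder of $(S,\varphi)$ meets the image of the triangular region $T\subset\mathcal B$. Its core is a closed geodesic $\gamma$ containing no cone points, and since the immersion $\mathcal B\to(S,\varphi)$ is a local isometry I may pull $\gamma$ back and study $\gamma\cap T$ as a union of straight chords of the equilateral triangle. The goal is to locate a transverse self-intersection of $\gamma$ inside the image of $\mathcal B$; because the immersion is a local isometry, such a crossing is a genuine self-intersection in $S$, contradicting that $\gamma$ is embedded.

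First I would record two elementary observations. A chord of $T$ joins two \emph{distinct} sides, and since a straight line meets the single unidentified side (the one facing the buffer) at most once, every chord of $\gamma\cap T$ has at least one endpoint on one of the two identified sides. Consequently $\gamma$ crosses an identified side, say at a point $x$, and continues from the point $y$ glued to $x$. Because the two side identifications are rotations through $\pm\frac{\pi}{3}$, developing the two strands $\sigma$ (ending at $x$) and $\sigma'$ (issuing from $y$) into a single copy of the plane places them on lines meeting at angle $\frac{\pi}{3}$.

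The heart of the argument is then Proposition~\ref{Classical}. Being the two sides of an angle of $\frac{\pi}{3}$, the strands must cross at a point $C$ with $\angle xCy=\frac{\pi}{3}$, so $C$ lies on the locus $I_{xy}(\frac{\pi}{3})$, one of the two arcs of central angle $2(\pi-\frac{\pi}{3})=\frac{4\pi}{3}$. This $C$ is the desired self-intersection, and it remains to check that it is actually realized within $T$ together with its buffer: if $C$ lies inside $T$ there is nothing to do, while if $C$ lies on the far side of the unidentified side the strands first exit into the buffer and meet there. This is exactly what the buffer is for. Its outer boundary is the arc of central angle $\frac{4\pi}{3}$ tangent to the two identified sides, that is, the boundary arc is itself an $I(\frac{\pi}{3})$-locus, matched to the $\frac{4\pi}{3}=2(\pi-\frac{\pi}{3})$ produced by Proposition~\ref{Classical}, so that every such crossing point falls inside $\mathcal B$ and hence inside $(S,\varphi)$.

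The step I expect to be the main obstacle is this last containment. One must show, uniformly over the finitely many cases (which identified edge is crossed, together with the two orientations), that the crossing point $C$—equivalently the relevant arc of $I_{xy}(\frac{\pi}{3})$—lies in $T$ together with the buffer, and that the two strands genuinely cross there rather than merely lying on a common circle. This is precisely where the calibration of the buffer must be used: its central angle $\frac{4\pi}{3}$ and its tangency to the identified sides at the endpoints of the free side are what guarantee that the arcs $I_{xy}(\frac{\pi}{3})$ swept out as $x$ ranges over an identified side remain inside the immersed block.
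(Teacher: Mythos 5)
Your high-level idea---locate a self-intersection of the geodesic via Proposition~\ref{Classical} and use the calibration of the buffer arc to argue that the intersection point lands inside $\mathcal B$---is one ingredient of the paper's argument, but as written the proposal has genuine gaps. The most serious one: you analyze only a \emph{single} crossing of an identified side together with its two adjacent strands $\sigma,\sigma'$. If the geodesic meets the identified sides several times (the paper shows the relevant arc of $\gamma$ decomposes into as many as five segments), then $\sigma$ or $\sigma'$ may terminate on another identified side before reaching the putative intersection point $C$ of the two developed lines; the continuation then lives in a different developed copy of the triangle, and your two strands simply do not cross inside $\mathcal B$. The paper deals with this by lifting to the degree-$3$ holonomy almost trivializing cover $\widetilde{\mathcal B}$, bounding the number of segments in the decomposition, and running a case analysis in which some configurations are handled not by the $I_{xy}(\theta)$ locus at all but by a ``barrier'' argument (a middle segment joining the two extreme sub-arcs blocks the geodesic from exiting $\mathcal B$ without crossing itself). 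Your proposal has no mechanism for these cases.

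Second, even where the locus argument does apply, the crossing angle is not uniformly $\frac{\pi}{3}$: the angle $\angle xCy$ is the angle between the two developed lines \emph{or its supplement}, depending on which rays from $C$ contain $x$ and $y$. In the paper's single-crossing case the intersection point lies on $I_{xy}(\frac{2\pi}{3})$ (arcs of central angle $\frac{2\pi}{3}$), and only in the ``adjacent sub-arcs'' case does one get $I_{xy}(\frac{\pi}{3})$---that is the one case that actually forces the buffer to have central angle $\frac{4\pi}{3}$. Finally, the containment of the relevant arc of $I_{xy}(\theta)$ in the triangle-plus-buffer, together with the fact that the strands genuinely reach $C$ rather than diverging from it, is the real content of the proposition; you correctly flag it as the main obstacle but leave it unverified. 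It must be checked case by case: the loci $I_{xy}(\theta)$ for interior points $x,y$ of the identified sides are different circles from the boundary arc of the buffer, so the ``matching of central angles'' is suggestive but is not by itself a proof.
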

\begin{proof}  Suppose $\gamma$ is a closed non-singular geodesic in $(S,\varphi)$ that intersects the (image of) the triangular part of the building block, $\mathcal B$.  The goal is to show that $\gamma$ must have a self-intersection.  Consider a maximal arc $\gamma'$ of $\gamma$ that lifts to $\mathcal B$ meeting the triangular region: lifting, we view $\gamma'$ as a geodesic arc that enters/exits $\mathcal B$ in $\partial \mathcal B$ (the circular arc).  If no such arc $\gamma'$ exists, then the entire curve $\gamma$ lifts to $\mathcal B$ and we let $\gamma'=\gamma$.

%\js{the sentence above is a bit weird but I am not sure how to fix it}

The arcs labeled $a$ and $b$ in Figure~\ref{NoCylBlock2} project to closed geodesics through the cone point which we also call $a$ and $b$, respectively.  Now observe that $\gamma'$ can be expressed as a concatenation of segments
\[ \gamma' = \gamma_1\gamma_2\cdots\gamma_k,\]
each segment of which either connect consecutive intersection points of $\gamma'$ with $a \cup b$ or connects $\partial \mathcal B$ to an intersection point of $a \cup b$ (with no other intersection points with $a \cup b$ in between).

It is convenient to further lift $\gamma'$ to the holonomy almost trivializing cover $\widetilde{\mathcal B} \to \mathcal B$ illustrated in Figure~\ref{NoCylBlock2:Cover}.  We view $\gamma' = \gamma_1\gamma_2 \cdots \gamma_k$ in either $\mathcal B$ or $\widetilde{\mathcal B}$, whichever is more convenient.  The preimage of $a$ and $b$ is the union of arcs $a_1,a_2,a_3$ and $b_1,b_2,b_3$, respectively, as illustrated.

%\cl{Can we add the three circular arcs to Figure~\ref{NoCylBlock2:Cover} and color the triangles green to more clearly illustrate the HAT cover of $\mathcal B$?}
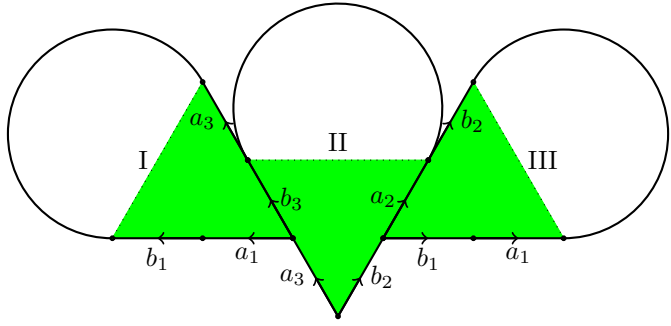
\begin{figure}[ht]
\begin{tikzpicture}[scale=1.2]

\draw[green,fill] (-1.5,-0.866) -- (0.5,-0.866) -- (-0.5,0.866);
\draw[green,fill] (-2,-1.732) -- (-3,0) -- (-1,0);
\draw[green,fill] (-2.5,-0.866) -- (-4.5,-0.866)-- (-3.5,0.866);
\draw[thick] (0.5,-0.866) arc (-90:150:1.157);
\draw[thick] (-4.5,-0.866) arc (270:30:1.157);
\draw[thick] (-1,0) arc (-30:210:1.157);

\draw[thick] (-1.5,-0.866) circle (0.02) -- (-1,0) circle (0.02) -- (-0.5,0.866) circle (0.02);
\draw[thick] (0.5,-0.866) circle (0.02) -- (-0.5,-0.866) circle (0.02) -- (-1.5,-0.866);
\draw[dotted] (0.5,-0.866) -- node[right]{III} (-0.5,0.866);
\draw[thick,->] (-1.5,-0.866) -- (-1.25,-0.433) node[left]{$a_2$};
\draw[thick,->] (-1,0) -- (-0.75,0.433) node[right]{$b_2$};
\draw[thick,->] (-0.5,-0.866) -- (0,-0.866) node[below]{$a_1$};
\draw[thick,->] (-1.5,-0.866) -- (-1,-0.866) node[below]{$b_1$};

\draw[thick] (-2,-1.732) circle (0.02) -- (-1.5,-0.866) circle (0.02) -- (-1,0) circle (0.02);
\draw[thick] (-3,0) circle (0.02) -- (-2.5,-0.866) circle (0.02) -- (-2,-1.732);
\draw[dotted] (-1,0) -- node[above]{II} (-3,0);
\draw[thick,->] (-2,-1.732) -- (-1.75,-1.299) node[right]{$b_2$};
\draw[thick,->] (-2,-1.732) -- (-2.25,-1.299) node[left]{$a_3$};
\draw[thick,->] (-2.5,-0.866) -- (-2.75,-0.433) node[right]{$b_3$};

\draw[thick] (-2.5,-0.866) circle (0.02) -- (-3,0) circle (0.02) -- (-3.5,0.866) circle (0.02);
\draw[thick] (-4.5,-0.866) circle (0.02) -- (-3.5,-0.866) circle (0.02) -- (-2.5,-0.866);
\draw[dotted] (-3.5,0.866) -- node[left]{I} (-4.5,-0.866);
\draw[thick,->] (-2.5,-0.866) -- (-3,-0.866) node[below]{$a_1$};
\draw[thick,->] (-3.5,-0.866) -- (-4,-0.866) node[below]{$b_1$};
\draw[thick,->] (-3,0) -- (-3.25,0.433) node[left]{$a_3$};

\end{tikzpicture}
\caption{The holonomy almost trivializing cover $\widetilde{\mathcal B}$ of $\mathcal B$.}
\label{NoCylBlock2:Cover}
\end{figure}

\begin{claim}  $\gamma'$ must have endpoints on $\partial \mathcal B$, and in the concatenation $\gamma' = \gamma_1\gamma_2 \cdots \gamma_k$, the number of segments $k$ is  at most five.  
\end{claim}
\begin{proof}[Proof of Claim.] Observe that since $\gamma'$ meets the triangle, it must hit at least one of $a$ or $b$.  Consider any intersection point with $a \cup b$.  Appealing to the obvious symmetry interchanging $a$ and $b$, we may assume this intersection point lies in $a$.  Lifting to $\widetilde{\mathcal B}$ (and composing with a covering transformation to choose a different lift if necessary), we can further assume the lift intersects $a_2$. 
By inspecting the figure (and its suggested situation in the plane), we note that if the lift  makes angle between $0$ and $\frac{\pi}{3}$ then it will exit $\widetilde{\mathcal B}$ in region III, while if the angle is between $\frac{\pi}3$ and $\pi$, it will either exit $\widetilde{\mathcal B}$ in region II or pass through $b_3$ and exit $\widetilde{\mathcal B}$ through region I.  In particular, at any point of intersection, we may follow the geodesic in at least one direction (not necessarily both) and in that direction it will exit $\widetilde{\mathcal B}$ through the boundary after intersecting the union of the $a_i$ and $b_j$ in at most one other point.  If $k > 5$, then the point of intersection where $\gamma_3$ meets $\gamma_4$ cannot escape $\widetilde{\mathcal B}$ in two or fewer segments in either direction, and hence $k \leq 5$.
Therefore, there are at most five arcs in the concatenation, completing the proof of the claim.
\end{proof} 

Next, observe that from inside the triangle defining $\mathcal B$ we see four segments labeled $b$, $a$, $b$, $a$ counterclockwise in that order.  For each $1 < i < k$, the arc $\gamma_i$ is contained in the triangle and joins two such labeled segments, but not every pair of segments can be joined.  Specifically since both the first pair and last pair lie along the same side of the triangle, neither of these pairs of segments can be connected by a segment disjoint from the cone point.  Thus, for each $1 < i < k$, $\gamma_i$ either joins the two $a$ segments, the two $b$ segments (which up to symmetry is the same as two $a$ segments), the first and last segments or the middle two (adjacent) segments.  

First suppose that some $\gamma_i$ connects the pair of first and last segments.  In this case, we can show that $\gamma'$ has a self intersection inside the triangle of $\mathcal B$.  To see this, we may lift $\gamma_i$ to $\widetilde{\mathcal B}$ so that it connects $a_3$ and $b_1$: we have labeled this as $\gamma_{i_1}$ in Figure~\ref{NoCylBlock2:Cover2}.  The arc $\gamma_{i_1}$ serves as a ``barrier'', blocking $\gamma'$ from exiting $\mathcal B$ without crossing it.  Indeed, after interchanging the roles of $a$ and $b$, we may assume that another lift to $\widetilde{\mathcal B}$ connects $a_2$ and $b_3$ and extends to intersect the first lift in a point $P_1$ as shown.  This intersection point $P_1$ projects to a self-intersection point of $\gamma'$ inside the triangle, and hence $\gamma$ has a self-intersection point.

Therefore, to prove that $\gamma$ must have a self-intersection point in general, it suffices to consider the case that no $\gamma_i$ connects the first and last segments.  Under these additional assumptions, and upon inspection of the proof of the claim above, we see that in fact $\gamma$ is split into only $k =2$ or $3$ arcs: this is because at any endpoint of the arcs, an arc on one side of this point must terminate at the boundary $\partial \mathcal B$.  

Now there are essentially three cases left to consider: one case where $k = 2$, in which case $\gamma'$ runs from $\partial \mathcal B$ to itself and hits $a$ (without loss of generality) in exactly one point; and two cases where $k =3$, depending on whether there is an arc between two $a$ segments (without loss of generality) or between the adjacent $a$ and $b$ segments.  These cases are illustrated by lifting to $\widetilde{\mathcal B}$ as shown in Figure~\ref{NoCylBlock2:Cover2}, where the arc in each of the cases labeled $\gamma_{i_2}$, $\gamma_{i_3}$, and $\gamma_{i_4}$, respectively.  We briefly describe how to find an intersection point in each of these cases, referring the reader to the figure to clarify the situation.  We note that the last case is the one that determines the circular arc bounding the buffer region.

Suppose that $k =2$, and without loss of generality, $\gamma'$ meets $a \cup b$ in exactly one point of $a$.  This is shown in Figure~\ref{NoCylBlock2:Cover2} with one lifted arc denoted $\gamma_{i_2}$ emanating from a point $x$ on $a_2$.  The other arc of $\gamma'$ can be translated as indicated to emanate from a point $y$ on $a_1$.  Then these arcs intersect in a point denoted $P_2$ in the figure which lies in $I_{x,y}(\frac{2\pi}3)$ (which clearly lies either inside the triangle or the buffer region).

Next, suppose $k =3$ and there is an arc connecting two $a$ edges (without loss of generality).  We lift this to $\widetilde{\mathcal B}$ to an arc denoted $\gamma_{i_3}$ in the figure connecting points on $a_1$ and $a_2$.  This arc really corresponds to {\em two} intersection points of $\gamma'$ with $a$, or one triple point of intersection.  If there's a triple point, then this is a point of self-intersection of $\gamma'$ as required.  Otherwise, one of the points of intersection is closer to the corner of the triangle and one is further.  Similar to the very first case considered (i.e. an arc connecting the pair of first and last segments), the arc $\gamma_{i_3}$ serves as a barrier that one of the other segments of $\gamma'$ must intersect before leaving the triangle.

Finally, suppose $k = 3$ and there is an arc connecting adjacent segments.  In $\widetilde{\mathcal B}$ we can lift this to the arc between $a_3$ and $b_2$ as shown.  The arcs on either side of this arc exit the triangle, and as illustrated on the left of the figure, intersect each other in a point $P_4$ beyond the triangle, at an angle of $\frac{\pi}3$.  If the points of intersection with $a$ and $b$ are $x$ and $y$, respectively, then $P_4$ is contained in the set $I_{x,y}(\frac{\pi}3)$, which is contained inside the buffer region (the figures shows a case where the point is near the boundary of the buffer).  This handles all cases and so completes the proof.  
\end{proof}

\begin{figure}[ht]
\begin{tikzpicture}[scale=1.8]

\draw[thick] (-1.5,-0.866) circle (0.012) -- (-1,0) circle (0.012) -- (-0.5,0.866) circle (0.012);
\draw[thick] (0.5,-0.866) circle (0.012) -- (-0.5,-0.866) circle (0.012) -- (-1.5,-0.866);
\draw[dotted] (0.5,-0.866) -- (-0.5,0.866);
\draw[thick,->] (-1.5,-0.866) -- (-1.25,-0.433) node[left]{$a_2$};
\draw[thick,->] (-1,0) -- (-0.75,0.433) node[left]{$b_2$};
\draw[thick,->] (-0.5,-0.866) -- (0,-0.866) node[below]{$a_1$};
\draw[thick,->] (-1.5,-0.866) -- (-1,-0.866) node[below]{$b_1$};

\draw[thick] (-2,-1.732) circle (0.012) -- (-1.5,-0.866) circle (0.012) -- (-1,0) circle (0.012);
\draw[thick] (-3,0) circle (0.012) -- (-2.5,-0.866) circle (0.012) -- (-2,-1.732);
\draw[dotted] (-1,0) -- (-3,0);
\draw[thick,->] (-2,-1.732) -- (-1.75,-1.299) node[right]{$b_2$};
\draw[thick,->] (-2,-1.732) -- (-2.25,-1.299) node[left]{$a_3$};
\draw[thick,->] (-2.5,-0.866) -- (-2.75,-0.433) node[right]{$b_3$};

\draw[thick] (-2.5,-0.866) circle (0.012) -- (-3,0) circle (0.012) -- (-3.5,0.866) circle (0.012);
\draw[thick] (-4.5,-0.866) circle (0.012) -- (-3.5,-0.866) circle (0.012) -- (-2.5,-0.866);
\draw[dotted] (-3.5,0.866) -- (-4.5,-0.866);
\draw[thick,->] (-2.5,-0.866) -- (-3,-0.866) node[below]{$a_1$};
\draw[thick,->] (-3.5,-0.866) -- (-4,-0.866) node[below]{$b_1$};
\draw[thick,->] (-3,0) -- (-3.25,0.433) node[right]{$a_3$};

\draw[blue, dashed] (0.44,-0.866) arc (0:120:0.96);
\draw[blue] (0.44,-0.866) -- (0.3,-0.2165);
\draw[blue] (0.45,-0.4) node{$P_2$};
\draw[blue] (-1.0433,-0.0433) --  node[above]{$\gamma_{i_2}$} (0.7567,-0.6433);

\draw[brown] (0.25,-0.866) -- node[left]{$\gamma_{i_3}$} (-1.3,-0.5);
\draw[brown] (-0.08,-0.866) node[above]{$P_3$} -- (-0.18,-0.433);
%\draw[brown, dashed] (-0.08,-0.866) arc (0:120:0.4);

\draw[red] (-4.25,-0.866) -- node[right]{$\gamma_{i_1}$} (-3.125,0.2165);
\draw[red] (-3.45,0.12) node{$P_1$};
\draw[red] (-1.375,-0.6495) -- (-2.875,-0.2165) -- (-3.725,0);

\draw[magenta] (-1.54,-0.9093) -- node[below]{$\gamma_{i_4}$} (-2.46,-0.9093);
\draw[magenta] (-3.46,0.8227) -- %node[below]{$\gamma_4$} 
(-5.45,0.8227);
\draw[magenta] (-4.42,-0.866) -- (-5.45,0.8227) node[left]{$P_4$};

\end{tikzpicture}
\caption{The four cases of geodesics $\gamma_j$ crossing edge $a$ with a self-intersection $P_j$ inside the building block.}
\label{NoCylBlock2:Cover2}
\end{figure}
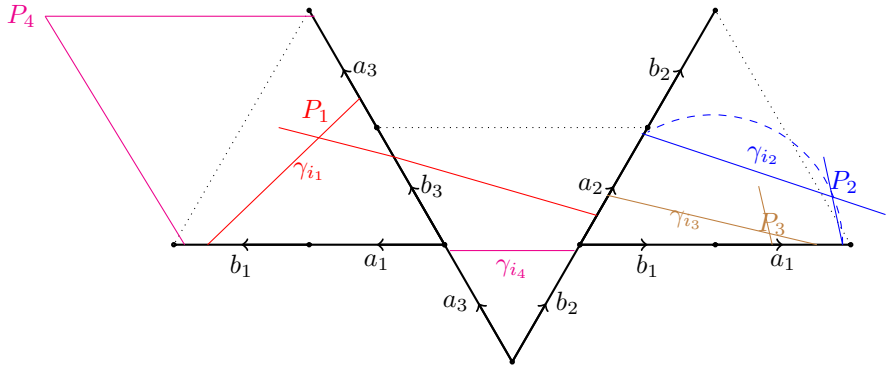

Now consider, for example, the surface on the right in Figure~\ref{NoCyl12gon}.  This is a genus $3$ surface, and in each of the three corners of the big triangle, we see a locally isometrically embedded building block.  The union of the triangle parts of these are all cylinder-free regions.  What's left is an equilateral triangle in the middle, which can clearly contain no cylinders.  Therefore, every cylinder necessarily meets the cylinder-free region and so there are no embedded cylinders.  It is not difficult to construct many more such examples from this idea.

%%%%%%%%%%%%%%%%%%%%%%%%%%%%%%%%%%%%%%%%%%%%%%%%%%%%%%%%

\subsection{Large diameter sets of cylinder curves.}\label{S:qBound}

Here we discuss a construction of fully punctured flat metrics with finite holonomy for which the diameter of $\EC(S)$ is arbitrarily large.  Let $\alpha$ and $\beta$ be simple closed curves on a surface $S$ that fill and so that all intersections have the same sign (i.e.~the geometric and algebraic intersection numbers are the same).  There exists a square-tiled translation structure $\varphi$ on $S$ whose horizontal and vertical foliations are cylinders foliated by curves representing $\alpha$ and $\beta$, respectively.  The number of cone points is at most $2g-2$, by Poincar\'e Hopf.  This flat metric can be constructed from a horizontal cylinder of height $1$ and width $i(\alpha,\beta)$ by gluing the sides and at most $2g-1$ intervals along the top to the intervals along the bottom (these intervals are all horizontal saddle connections, except possibly two adjacent intervals obtained from a saddle connection that must be cut to glue the top to the bottom); see Figure~\ref{SQ}.  

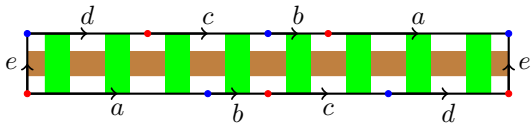
\begin{figure}[ht]
\begin{tikzpicture}[scale=0.8]

\draw[fill,brown] (-4,-0.7) -- (4,-0.7) -- (4,-0.3) -- (-4,-0.3);
\draw[fill,green] (-3.3,0) -- (-3.7,0) -- (-3.7,-1) -- (-3.3,-1);
\draw[fill,green] (-2.3,0) -- (-2.7,0) -- (-2.7,-1) -- (-2.3,-1);
\draw[fill,green] (-1.3,0) -- (-1.7,0) -- (-1.7,-1) -- (-1.3,-1);
\draw[fill,green] (-0.3,0) -- (-0.7,0) -- (-0.7,-1) -- (-0.3,-1);
\draw[fill,green] (0.3,0) -- (0.7,0) -- (0.7,-1) -- (0.3,-1);
\draw[fill,green] (1.3,0) -- (1.7,0) -- (1.7,-1) -- (1.3,-1);
\draw[fill,green] (2.3,0) -- (2.7,0) -- (2.7,-1) -- (2.3,-1);
\draw[fill,green] (3.3,0) -- (3.7,0) -- (3.7,-1) -- (3.3,-1);

\draw[thick] (-4,-1) -- (4,-1) -- (4,0) -- (-4,0) -- (-4,-1);
\draw[thick,->] (-4,-1) -- (-2.5,-1) node[below]{$a$};
\draw[thick,->] (-1,-1) -- (-0.5,-1) node[below]{$b$};
\draw[thick,->] (0,-1) -- (1,-1) node[below]{$c$};
\draw[thick,->] (2,-1) -- (3,-1) node[below]{$d$};
\draw[thick,->] (4,-1) -- (4,-0.5) node[right]{$e$};
\draw[thick,->] (-4,-1) -- (-4,-0.5) node[left]{$e$};
\draw[thick,->] (-4,0) -- (-3,0) node[above]{$d$};
\draw[thick,->] (-2,0) -- (-1,0) node[above]{$c$};
\draw[thick,->] (0,0) -- (0.5,0) node[above]{$b$};
\draw[thick,->] (1,0) -- (2.5,0) node[above]{$a$};

\draw[fill,blue] (4,0) circle (0.05);
\draw[fill,red] (1,0) circle (0.05);
\draw[fill,blue] (0,0) circle (0.05);
\draw[fill,red] (-2,0) circle (0.05);
\draw[fill,blue] (-4,0) circle (0.05);
\draw[fill,red] (4,-1) circle (0.05);
\draw[fill,blue] (2,-1) circle (0.05);
\draw[fill,red] (0,-1) circle (0.05);
\draw[fill,blue] (-1,-1) circle (0.05);
\draw[fill,red] (-4,-1) circle (0.05);

\end{tikzpicture}
\caption{Square-tiled flat metric with a single horizontal cylinder and a single vertical cylinder.}
\label{SQ}
\end{figure}

Note that $\alpha$ and $\beta$ are also curves on $S^\circ$.  Furthermore, for any $d \geq 3$, we can find such a pair of curves so that the distance in the curve graph $\C(S^\circ)$ is at least $d$.  
To see this, pick any $\alpha$ and $\beta$ as above. Then replace $\beta$ with its image under a sufficiently high power of a pseudo-Anosov in the Veech group of $\varphi$ (which exists due to work of Thurston \cite{Thurston} and Veech \cite{Veech}) to complete the argument.  %new
We therefore assume that the distance in $\C(S^\circ)$ between $\alpha$ and $\beta$ is at least $d$.

Now we deform the flat metric $\varphi^\circ$ to a flat metric $\hat \varphi^\circ$ on $S^\circ$ so that $\alpha$ and $\beta$ are still cylinder curves, but so that the holonomy now has order at least $3$.  To do this, we proceed as follows.  Consider the (at most $2g-1$) horizontal segments along the top and bottom of the rectangle and deform the rectangle into a polygon along these segments to that they are no longer horizontal, but so that each makes a (small) angle with the horizontal that is a rational multiple of $\pi$.  We further assume that sides on the top that are glued to sides on the bottom have the same length.  The vertical sides remain vertical and of the same length.  Any such construction produces a flat metric and if the deformation is small enough, $\alpha$ is still a cylinder curve.  The rationality condition on the angle made with the horizontal ensures that the holonomy is finite; see Figure~\ref{SQdef}.  Taking care with the saddle connection on the top of the rectangle that was cut (if such exists), we can assume that the metric completion has the same number of cone points.

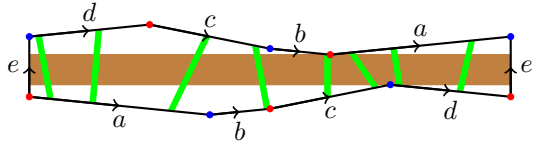
\begin{figure}[ht]
\begin{tikzpicture}[scale=0.8]

\draw[fill,brown] (-4,-0.8) -- (4,-0.8) -- (4,-0.3) -- (-4,-0.3);
\draw[fill,green] (-3.9,0.01) -- (-3.8,0.02) -- (-3.6,-1.04) -- (-3.7,-1.03);
\draw[fill,green] (1.3,-0.27) -- (1.4,-0.26) -- (1.8,-0.84) -- (1.7,-0.86);
\draw[fill,green] (-0.3,-0.14) -- (-0.2,-0.16) -- (0,-1.2) -- (-0.1,-1.21);
\draw[fill,green] (0.9,-0.29) -- (1,-0.3) -- (1,-1) -- (0.9,-1.01);
\draw[fill,green] (-1.1,0.02) -- (-1,0) -- (-1.6,-1.24) -- (-1.7,-1.23);
\draw[fill,green] (3.3,-0.07) -- (3.4,-0.06) -- (3.2,-0.92) -- (3.1,-0.91);
\draw[fill,green] (-2.9,0.11) -- (-2.8,0.12) -- (-2.9,-1.11) -- (-3,-1.1);
\draw[fill,green] (2,-0.2) -- (2.1,-0.19) -- (2.2,-0.82) -- (2.1,-0.81);

\draw[thick] (-4,-1) -- (-1,-1.3) -- (0,-1.2) -- (2,-0.8) -- (4,-1) -- (4,0) -- (1,-0.3) -- (0,-0.2) -- (-2,0.2) -- (-4,0) -- (-4,-1);
\draw[thick,->] (-4,-1) -- (-2.5,-1.15) node[below]{$a$};
\draw[thick,->] (-1,-1.3) -- (-0.5,-1.25) node[below]{$b$};
\draw[thick,->] (0,-1.2) -- (1,-1) node[below]{$c$};
\draw[thick,->] (2,-0.8) -- (3,-0.9) node[below]{$d$};
\draw[thick,->] (4,-1) -- (4,-0.5) node[right]{$e$};
\draw[thick,->] (-4,-1) -- (-4,-0.5) node[left]{$e$};
\draw[thick,->] (-4,0) -- (-3,0.1) node[above]{$d$};
\draw[thick,->] (-2,0.2) -- (-1,0) node[above]{$c$};
\draw[thick,->] (0,-0.2) -- (0.5,-0.25) node[above]{$b$};
\draw[thick,->] (1,-0.3) -- (2.5,-0.15) node[above]{$a$};

\draw[fill,blue] (4,0) circle (0.05);
\draw[fill,red] (1,-0.3) circle (0.05);
\draw[fill,blue] (0,-0.2) circle (0.05);
\draw[fill,red] (-2,0.2) circle (0.05);
\draw[fill,blue] (-4,0) circle (0.05);
\draw[fill,red] (4,-1) circle (0.05);
\draw[fill,blue] (2,-0.8) circle (0.05);
\draw[fill,red] (0,-1.2) circle (0.05);
\draw[fill,blue] (-1,-1.3) circle (0.05);
\draw[fill,red] (-4,-1) circle (0.05);

\end{tikzpicture}
\caption{Deformed flat metric along with the cylinders.}
\label{SQdef}
\end{figure}

To ensure that $\beta$ is still a cylinder curve requires a little more care.  First observe that the holonomy around $\beta$ (oriented ``upward") can be computed as follows: If $\delta_1,\ldots,\delta_k$ are the horizontal saddle connections of $\varphi$, let $n_1,\ldots,n_k$ be the respective number intersection point of $\beta$ with these (so $n_1+\ldots+n_k = i(\alpha,\beta)$).  In the deformed polygon, the saddle connection $\delta_i$ appears as arcs along the top/bottom, each making some angle with the horizontal, and the map gluing the top to the bottom is a composition of a translation and rotation through an angle $\theta_i$ which is the difference of the two angles.  The holonomy of $\beta$ is then a rotation through angle $n_1\theta_1+ \cdots + n_k\theta_k$.  A necessary condition for $\beta$ to be a cylinder curve is that the holonomy is trivial.  If the deformations are small enough, then this is also sufficient.

Now we need to ensure that we can arrange for all of these conditions to be met.  One way to do this is as follows.  Choose a pair of saddle connections, say $\delta_1$ and $\delta_2$.  Along the top, rotate $\delta_1$ by a small positive angle $\psi_1 \in \mathbb Q \pi$ and along the bottom, rotate $\delta_1$ along the bottom by $-\psi_1$, so that the $\delta_1$--arc along the top is glued to that on the bottom by a translation and rotation through angle $\theta_1 = - 2 \psi_1$.  Similarly, along the top, rotate $\delta_2$ by a small {\em negative} angle $\psi_2 \in \mathbb Q \pi$ and along the bottom by $-\psi_2$, so the rotational part of the gluing is through an angle $\theta_2 = -2 \psi_2$.  We assume that $n_1 \theta_1 + n_2 \theta_2 = 0$.  We keep all other saddle connections horizontal (and of the same length) on both the top and bottom.  Observe that the vertical sides can be assumed to remain vertical, but they may have different lengths.  If $\psi_1$ and $\psi_2$ are sufficiently small, then the discrepancy in their lengths is arbitrarily small.  By adjusting the lengths of $\delta_1$, say, we can ensure that the vertical sides have the same length, as required.

Therefore, we can construct a fully punctured flat metric with finite holonomy on $S^\circ$ containing $\alpha$ and $\beta$ which have arbitrarily large distance in $\C(S^\circ)$.

%Let $\delta_1^+,\ldots,\delta_k^+$ be the saddle connections along the top of the rectangle which are glued to the saddle connections along the bottom $\delta_1^-,\cdots,\delta_k^-$.  Orient $\beta$ pointing up, and let $every time $\beta$ crosses a segment along the top and re-enters the polygon along the bottom, the gluing is a translation composed with a rotation through the difference in the angles of the glued sides.  Thus, the holonomy around $\beta$ is trivial if and only if the sum of the differences of these angles 
%In order for the resulting flat metric to have the same set of cone points, the deformation is done by cutting along horizontal saddle connections, rotating the edges, and then gluing the pairs to get a $q$-differential. The rotations are chosen such that each rotation is rational and the vertical edges on the left and right remains the same. The $q$ value of the resulting $q$-differential needs to be large in order for the vertical cylinder to not degenerate. For example, if the number of squares in the square-tiled surface is $N$, then picking $q$ large enough such that 
%\[
%N^2\tan \left( \frac{N\pi}{q} \right) < 1
%\]
%will guarantee the non-degeneracy of the vertical cylinder.

%The deformation construction allows us to construct fully punctured $q$-differential $\varphi$ with the diameter of $\EC(\varphi)$ being arbitrarily large, at the cost of $q$ being large as well.

%%%%%%%%%%%%%%%%%%%%%%%%%%%%%%%%%%%%%%%%%%%%%%%%%%%%%%%%

\subsection{Infinite diameter set of cylinder curves} \label{S:slit construction}

Here we describe a construction of $q$--flat metrics with holonomy of order greater than $2$ and with $\EC$ having infinite diameter.  According to Theorem~\ref{T:main}, these examples {\em cannot} be fully punctured.
%Theorem~\ref{T:main}, we may not remove the assumption that the surface is fully punctured.
 %new
 
Before we begin, we need one more result due to Klarreich \cite{Kla} (see also \cite{HamBoun,Bom}). To state it, we consider the subspace of measured filling foliations (measured foliations having positive intersection number with every simple closed curve) in the space of all measured foliations, and let $\FL(S)$ denote the quotient space of this subspace by forgetting measures.  We let $\partial \C(S)$ denote the Gromov boundary of the curve graph of $S$.
\begin{theorem} \label{T:Klarreich} There is a mapping class group equivariant homeomorphism
\[ \FL(S) \cong \partial \C(S).\]
Moreover, the bounded length curves along a Teichm\"uller geodesic ray defined by a quadratic differential whose vertical foliation is supported on $\mathcal F \in \FL(S)$ limits to the boundary point corresponding to $\mathcal F$.  \qed
\end{theorem}
In what follows, we use this theorem to make the identification $\partial \C(S)=\FL(S)$.

\begin{theorem} \label{T:6 flat construction}  Given any surface and $\mathcal F \in \FL(S)$, there exist a flat metric $\varphi$ with holonomy of order $6$ for which $\EC(\varphi) \subset \C(S)$ accumulates on $\mathcal F$.  In particular, $\EC(\varphi)$ has infinite diameter.
\end{theorem}
\begin{proof}  By work of Hubbard and Masur \cite{HubMas}, there is a semi-translation structure $\hat \varphi$ on $S$ having vertical foliation supported on $\mathcal F$, up to Whitehead moves and isotopy.  In fact, the Hubbard-Masur result is much stronger, allowing one to specify the underlying conformal structure.  Suppose first that $\hat \varphi$ has a cone point of cone angle $3\pi$ (which is the generic situation). For example, $\hat \varphi$ may be constructed by gluing sides of the polygon as in Figure~\ref{H11}.

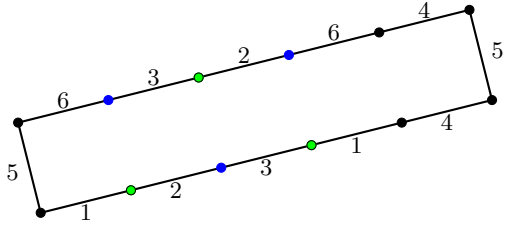
\begin{figure}[ht]
\begin{tikzpicture}[scale=0.3]

\draw[thick] (-1,4) -- (0,0) -- (20,5) -- (19,9) -- (-1,4);
\draw[fill,black] (0,0) circle (0.2);
\draw[fill,green] (4,1) circle (0.2); \draw (4,1) circle (0.2);
\draw[fill,blue] (8,2) circle (0.2);
\draw[fill,green] (12,3) circle (0.2); \draw (12,3) circle (0.2);
\draw[fill,black] (16,4) circle (0.2);
\draw[fill,black] (20,5) circle (0.2);
\draw[fill,black] (19,9) circle (0.2);
\draw[fill,black] (15,8) circle (0.2);
\draw[fill,blue] (11,7) circle (0.2);
\draw[fill,green] (7,6) circle (0.2); \draw (7,6) circle (0.2);
\draw[fill,blue] (3,5) circle (0.2);
\draw[fill,black] (-1,4) circle (0.2);
\node at (2,0) {\small 1};
\node at (6,1) {\small 2};
\node at (10,2) {\small 3};
\node at (14,3) {\small 1};
\node at (18,4) {\small 4};
\node at (-1.25,1.8) {\small 5};
\node at (20.25,7.2) {\small 5};
\node at (1,5) {\small 6};
\node at (5,6) {\small 3};
\node at (9,7) {\small 2};
\node at (13,8) {\small 6};
\node at (17,9) {\small 4};

\end{tikzpicture}
\caption{Semi-translation surface of genus 2 with one cone point of angle $4\pi$ and two of angle $3\pi$.  Sides are glued as indicated, via a translation or translation composed with rotation through angle $\pi$ (to produce an orientable surface).}
\label{H11}
\end{figure}

We use this metric to construct another flat metric with holonomy of order $6$ as follows.  First, at a cone point of cone angle $3\pi$, cut a vertical slit of some small length $\epsilon>0$ along each of the three leaves of $\mathcal F$.  This produces a subsurface $\Sigma \subset S$ which is the complement of triangular disk, equipped with a Euclidean cone metric having three singular points of cone angle $2\pi$ on the boundary.  Cap off the triangular disk by gluing in an equilateral triangle of side length $2 \epsilon$, glued isometrically along the edges, so that the corners of the triangle are glued to the singular points of the boundary.   This produces a flat metric $\varphi$ on $S$, and the one cone point of cone angle $3\pi$ has given rise to three cone points of cone angle $\frac{7\pi}3$ occurring at the corners of the triangle.  The holonomy outside the triangle is exactly the same as before (order $2$), but now loops around these new cone points have holonomy which is a rotation of order $6$.  Therefore, the holonomy of $(S,\varphi)$ has order $6$.

\begin{figure}[ht]
\begin{tikzpicture}[scale=0.3]

\draw[ultra thick,purple] (8,2) -- (8,3.9);
\draw[ultra thick,purple] (3,5) -- (3,3.1);
\draw[ultra thick,purple] (11,7) -- (11,5.1);
\draw[fill,purple] (8,3.9) circle (0.2);
\draw[fill,purple] (3,3.1) circle (0.2);
\draw[fill,purple] (11,5.1) circle (0.2);

\draw[thick] (-1,4) -- (0,0) -- (20,5) -- (19,9) -- (-1,4);
\draw[fill,black] (0,0) circle (0.2);
\draw[fill,green] (4,1) circle (0.2);\draw (4,1) circle (0.2);
\draw[fill,blue] (8,2) circle (0.2);
\draw[fill,green] (12,3) circle (0.2);\draw (12,3) circle (0.2);
\draw[fill,black] (16,4) circle (0.2);
\draw[fill,black] (20,5) circle (0.2);
\draw[fill,black] (19,9) circle (0.2);
\draw[fill,black] (15,8) circle (0.2);
\draw[fill,blue] (11,7) circle (0.2);
\draw[fill,green] (7,6) circle (0.2);\draw (7,6) circle (0.2);
\draw[fill,blue] (3,5) circle (0.2);
\draw[fill,black] (-1,4) circle (0.2);
\node at (2,0) {\small 1};
\node at (6,1) {\small 2};
\node at (10,2) {\small 3};
\node at (14,3) {\small 1};
\node at (18,4) {\small 4};
\node at (-1.25,1.8) {\small 5};
\node at (20.25,7.2) {\small 5};
\node at (1,5) {\small 6};
\node at (5,6) {\small 3};
\node at (9,7) {\small 2};
\node at (13,8) {\small 6};
\node at (17,9) {\small 4};

%\node at (30,4) {\small Draw near};
%\node at (30,3) {\small the triangle here};
\draw[ultra thick,purple] (25,2.6) -- (25,6.4) -- (28.3,4.5) -- (25,2.6);
\draw[fill,blue] (25,4.5) circle (0.2);
\draw[fill,blue] (26.65,5.45) circle (0.2);
\draw[fill,blue] (26.65,3.55) circle (0.2);
\draw[fill,purple] (25,2.6) circle (0.2);
\draw[fill,purple] (25,6.4) circle (0.2);
\draw[fill,purple] (28.3,4.5) circle (0.2);

\draw[thick] (33,2.6) -- (33,6.4) -- (36.3,4.5) -- (33,2.6);
\draw[thick] (34.65,3.55) -- node[below]{\small 3} (36.75,2.8);
\draw[thick] (34.65,5.45) -- node[right]{\small 2} (34.4,7.65);
\draw[thick] (33,4.5) -- node[above]{\small 6} (30.6,3.9);

\end{tikzpicture}
\caption{Slits and gluing in a triangle without changing the surface type.  The far right shows the picture near the glued in triangle.}
\label{H11Slit}
\end{figure}
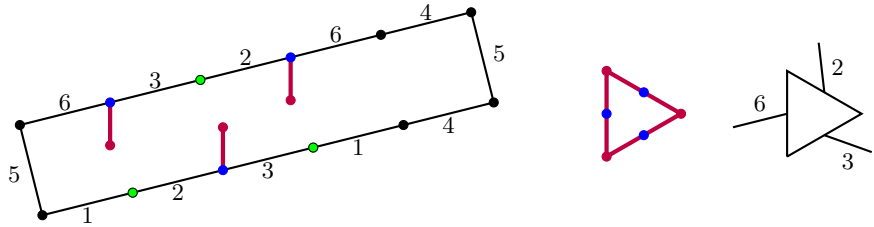

Next, let $g_t \cdot \hat \varphi$ be the flat metric obtained by applying the time = $t$ Teichm\"uller deformation to $\hat \varphi$, so that the underlying conformal structures of $g_t \cdot \hat \varphi$ lie along the Teichm\"uller geodesic defined by $\hat \varphi$; see Section \ref{2:Back}. According to a result of Vorobets \cite[Theorem~1.2]{Vorobets} (generalizing a result of Masur \cite{Mas}), for every $n$ there is a cylinder $A_n$ on $(S,g_n \cdot \hat \varphi)$ whose width (i.e.~distance between boundary components) is greater than some fixed number $\delta >0$.  Since the slit-arc length in $g_n \cdot \hat \varphi$ is $e^{-n} \epsilon$, which is tending to zero, there is a sub-cylinder $A_n' \subset A_n$ which is disjoint from the slit-arcs.  In the metric $\hat \varphi$, the cylinder $A_n'$ is still an embedded cylinder {\em and} it is disjoint from the slit-arcs.  Consequently, these cylinders also embed in $\varphi$.
Because the core curves $a_n$ of $A_n'$ have bounded length on $g_n \cdot \hat \varphi$, it follows that as $n \to \infty$, $\{a_n\} \subset \EC(\varphi)$ converges to $\mathcal F$ by Theorem~\ref{T:Klarreich}.

If $\hat \varphi$ has a cone point with cone angle $k\pi$ with $k \geq 3$, then we can proceed as above, slitting open along vertical arcs of length $\epsilon$ from that cone point.  Instead of gluing in a triangle, we first take a quotient of a triangle with side lengths $2\epsilon$ by the group of order three generated by a rotation.  The result is a Euclidean cone surface, topologically a disk, with a cone point of cone angle $\frac{2\pi}3$ in the interior, and the boundary consisting of a single straight line segment making angle $\frac{\pi}3$ at its endpoints.  Now take a $k$--fold branched cover, branched over the interior cone point, and glue this into the slit open surface as above to produce a new metric $\varphi$.  The arguments above can now be repeated to show that this is a $6$--flat metric (a loop around the ``central" cone point has holonomy order dividing $3$) and cylinder curves $\{a_n\}\subset \EC(\varphi)$ converging to $\mathcal F$.   Finally, if we have a cone point of cone angle $k\pi$, $k \geq 1$ at a completion point where there was a puncture of $S$, we can carry out the exact same construction, except we puncture the branched cover at the interior branch point before we glue it in.
%Finally, to handle an arbitrary surface, pick a cone point (of the completion if there are punctures) with cone angle $k\pi$, $k\geq 1$.  Instead of the quotient of a square, we take a quotient of a regular Euclidean triangle and a $k$--fold branched cover.  Now slit open again and glue in this branched cover.  If the cone point was the completion point corresponding to a puncture, puncture the resulting surface.  In this case, we get a $6$--flat metric $\varphi$, while the arguments above carry over verbatim to produce cylinders $\{a_n\}$ converging to $\mathcal F$.
\end{proof}

More generally, given a semi-translation structure $\hat \varphi$ on $S$, we slit open along some union of arcs emanating from cone points and punctures to produce a flat metric on a subsurface $Z \subset S$, so that the complementary subsurfaces are disks or once-punctured disks.  The interiors of the (possibly once-punctured) disks are assumed to be disjoint, however (a) punctures are allowed to lie on the boundary or in the interior and (b) the boundaries of the disks are allowed to intersect or self-intersect in points and/or punctures.    If there are punctures on the boundary, we allow ourselves to ``push" some or all into the interior of the disks (though we do not require that this be done for any of the punctures).   Alternatively, we can view this as perturbing $Z$ slightly (and taking the closure) so that it misses some (possibly empty) subset of the punctures, and we let $Z_0$ be the resulting subsurface.  Observe that after pushing some (or all or none) of the punctures into the disks, the boundaries of these disks are unions of geodesic segments connecting points (and possibly a puncture), and each non-puncture point has total angle $k\pi$, for some $k \geq 2$, in $Z_0$.

\newcommand{\diskreplacement}{
\filldraw[opacity=.2] (0,0) -- (2,-1) -- (2,1) -- (0,0) -- (-1,0) -- (-2,-1) -- (-3,0) -- (-4,-1) -- (-5,-1) -- (-5,1) -- (-4,1) -- (-3,0) -- (-2,1) -- (-1,0) -- (0,0);
\draw (0,0) -- (2,-1) -- (2,1) -- (0,0) -- (-1,0) -- (-2,-1) -- (-3,0) -- (-4,-1) -- (-5,-1) -- (-5,1) -- (-4,1) -- (-3,0) -- (-2,1) -- (-1,0) -- (0,0);
\draw (2,-1) -- (3,-2);
\draw (2,-1) -- (2,-2);
\draw (2,1) -- (3,2);
\draw (-1,0) -- (0,2);
\draw (-2,-1) -- (-2,-2);
\draw (-2,1) -- (-2,2);
\draw (-4,-1) -- (-3,-2);
\draw (-4,-1) -- (-4,-2);
\draw (-5,-1) -- (-6,-2);
\draw (-5,1) -- (-6,2);
\draw (-4,1) -- (-3.5,2);}

\begin{figure}[ht]
\begin{tikzpicture}[scale=.6]

\diskreplacement 
\draw[fill,white] (2,0) circle (0.1);
\draw (2,0) circle (0.1);
\draw[fill,white] (-1,0) circle (0.1);
\draw (-1,0) circle (0.1);
\draw[fill,white] (-4,-1) circle (0.1);
\draw (-4,-1) circle (0.1);

\begin{scope}[shift={(12,0)}]
\diskreplacement 
\draw[fill,white] (1.5,0) circle (0.1);
\draw (1.5,0) circle (0.1);
\draw[fill,white] (-1,0) circle (0.1);
\draw (-1,0) circle (0.1);
\draw[fill,white] (-4,0) circle (0.1);
\draw (-4,0) circle (0.1);
\end{scope}

\end{tikzpicture}
\caption{An example of disks (shaded) in the complement of $Z$ on the left.  Push $Z$ off some of the punctures to produce $Z_0$ on the right.}
\label{F:complementary disks and puncture moves}
\end{figure}
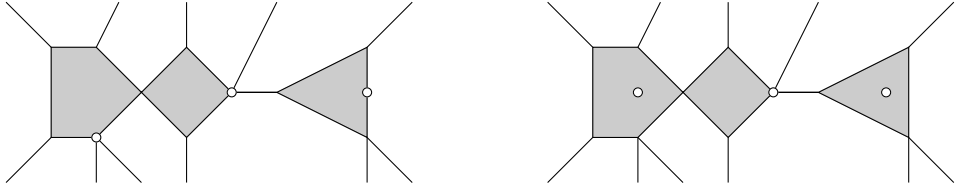

Next, glue in topological disks or punctured disks equipped with Euclidean cone metrics in the complementary components of $Z_0$ having piecewise geodesic boundary, ensuring that any punctures match up in the gluing.   If the resulting Euclidean cone metric $\varphi$ is a $q$--flat metric for some $q$, we say that $\varphi$ is obtained from $\hat \varphi$ by {\em disk replacements}.  The construction of the metric $\varphi$ from $\hat \varphi$ in the proof of Theorem~\ref{T:6 flat construction} is a special case of such disk replacements.  In fact, inspecting the proof we see that in general, if the vertical foliation of $\hat \varphi$ is supported on a filling foliation $\mathcal F \in \FF(S)$ and $\varphi$ is obtained from $\hat \varphi$ by disk replacements, then the proof of Theorem~\ref{T:6 flat construction} also shows that $\EC(\varphi)$ accumulates on $\mathcal F$ in $\partial \C(S)$.

\section{Main Theorem}\label{4:Main}

Having described a variety of examples, we now move on to prove the main theorem.

\bigskip

\noindent {\bf Theorem~\ref{T:main}}
{\em Let $\varphi$ be a fully punctured flat metric on a surface $S$ with finite holonomy of order at least $3$.  Then $\EC(\varphi)$ has finite diameter in $\C(S)$.}\\

We will need a lemma in the proof of Theorem~\ref{T:main}, as well as in the proof of Theorem~\ref{T:the only way} below, but this requires a little set up.

Suppose $\varphi$ is a fully punctured $q$--flat metric on $S$ with $q \geq 3$ and $p \colon (S_0,\varphi_0) \to (S,\varphi)$ the holonomy almost trivializing cover of degree $q_0$ (see Section~\ref{2:Back}).  Since $\varphi$ is fully punctured, so is $\varphi_0$.
Consider the preimage $p^{-1}(A)$ of a maximal embedded cylinder $A \subset S$.   The set $p^{-1}A$ is the union of pairwise disjoint open cylinders (since $A$ is embedded) in equally spaced directions $\theta^1, \cdots ,\theta^{q_0} \in \mathbb R \mathbb P^1$; the covering group is cyclic of order $q_0$ and acts by rotation permuting these directions.  Because $\varphi_0$ is fully punctured, and because the directions of these $q_0$ cylinders are all distinct, it follows that the closures of these cylinders in $S_0$ are pairwise disjoint.   Note, however, that the closure of any cylinder can self-intersect along some saddle connections in the boundary.

\begin{lemma} \label{L:limits of cores}  Let $\varphi$ be a fully punctured flat metric on a surface $S$ with holonomy of order $q \geq 3$ and $p \colon (S_0,\varphi_0) \to (S,\varphi)$ the holonomy almost trivializing cover of degree $q_0$.  Given  a sequence $\{A_n\}$ of distinct, embedded maximal cylinders in $(S,\varphi)$, let $\{A_n^1,\ldots,A_n^{q_0}\}$ be the sequence of components of the preimage in $S_0$.  

Then, there are pairwise disjoint closed subsurfaces $Z^1,\ldots,Z^{q_0}$ of $S_0$, foliated in equally spaced directions $\theta^1,\ldots,\theta^{q_0}$ in $\mathbb R \mathbb P^1$, and a subsequence of $\{A_n\}$ so that for each $j$, (any choice of) a core geodesic $\{a_n^j \subset A_n^j\}$ Hausdorff converges to $Z^j$.  Moreover, for sufficiently large $n$, $a_n^j$ is contained in $Z^j$.
\end{lemma}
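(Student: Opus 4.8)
The plan is to produce the subsurfaces $Z^j$ as Hausdorff limits of the core geodesics after passing to a subsequence, and to extract the required disjointness from the fact that the cylinders $A_n^j$ are disjoint. First I would record that, writing $g$ for a generator of the cyclic deck group (a rotation through a fixed angle), one has $A_n^{j+1} = g A_n^j$, so the directions $\theta_n^1,\ldots,\theta_n^{q_0}$ of $A_n^1,\ldots,A_n^{q_0}$ differ by the fixed rotation angle of $g$ and are in particular equally spaced for every $n$. Since $\mathbb{RP}^1$ is compact, I would pass to a subsequence so that $\theta_n^1 \to \theta^1$; the remaining directions then converge to $\theta^j := g^{j-1}\theta^1$, and these limits remain equally spaced, hence pairwise distinct (here $q_0 \geq 2$ because $q \geq 3$). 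Viewing each closed geodesic $a_n^j$ as a closed subset of the compact metric completion $\overline{S_0}$, the Blaschke selection theorem lets me pass to a further subsequence (diagonally over the finitely many $j$) so that $a_n^j$ Hausdorff converges to a closed set $Z^j$, with $g Z^j = Z^{j+1}$.

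Next I would show that each $Z^j$ is a genuine two-dimensional subsurface foliated in direction $\theta^j$. Since $(S_0,\varphi_0)$ is the holonomy almost trivializing cover, it is a half-translation surface, on which there are only finitely many cylinders whose core length is at most any fixed bound (the holonomy vectors of saddle connections form a discrete set). As the $A_n$, and hence the cores $a_n^j$, are distinct, it follows that $\mathrm{length}(a_n^j) \to \infty$. With lengths tending to infinity and directions tending to $\theta^j$, the structure theory of the direction-$\theta^j$ foliation on $(S_0,\varphi_0)$ identifies $Z^j$ as the closure of a single component of that foliation---either a maximal cylinder (approached transversally, so that the long winding core sweeps it out) or a minimal component. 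In either case $Z^j$ is connected (a Hausdorff limit of connected sets) and two-dimensional, with boundary a union of saddle connections in direction $\theta^j$.

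The heart of the argument is disjointness, and here I would use the transversality of the limiting directions together with the disjointness of the $A_n^j$ (recall $p^{-1}(A_n)=\bigsqcup_j A_n^j$). Suppose, for $j \neq j'$, that some $x$ lies in the interiors of both $Z^j$ and $Z^{j'}$. Since $x \in S_0$, choose a flat disk $D$ about $x$ avoiding all cone points. Because $x \in Z^j = \lim a_n^j$ there are points $p_n \in a_n^j$ with $p_n \to x$, and as $a_n^j$ is geodesic in direction $\theta_n^j \to \theta^j$, its trace in $D$ is a chord in direction approximately $\theta^j$ passing within $o(1)$ of $x$; similarly $a_n^{j'}$ contributes a chord in direction approximately $\theta^{j'}$ passing within $o(1)$ of $x$. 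Since $\theta^j \neq \theta^{j'}$, the angle between these chords is bounded below, so for all large $n$ they must cross inside $D$. This forces $a_n^j \cap a_n^{j'} \neq \emptyset$, contradicting $A_n^j \cap A_n^{j'} = \emptyset$. Hence the interiors of the $Z^j$ are pairwise disjoint; a short local argument at a transverse crossing of two boundary saddle connections (two half-planes with transverse boundaries always share a quadrant) upgrades this to disjointness of the closures.

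Finally, for the containment assertion, I would argue that for large $n$ the core $a_n^j$ cannot escape $Z^j$. This is the step I expect to be the main obstacle, since a geodesic whose direction $\theta_n^j$ only approximates $\theta^j$ is transverse to the boundary saddle connections of $Z^j$ and so, a priori, could cross them. The resolution is that crossing $\partial Z^j$ would send $a_n^j$ into an adjacent component of the direction-$\theta^j$ decomposition, a fixed region positively separated from $Z^j$; but the Hausdorff convergence $a_n^j \to Z^j$ forces $a_n^j$ eventually to avoid every such fixed region, so for large $n$ the core is trapped in $Z^j$. I would make this precise by taking $Z^j$ to be the closure of its foliation component and tracking its finitely many boundary saddle connections; carrying out this trapping argument rigorously, and in particular ruling out the core spiraling out through a boundary saddle connection, is the part that requires the most care.
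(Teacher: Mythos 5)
Your overall strategy---pass to a subsequence with convergent, equally spaced directions, take Hausdorff limits, derive disjointness from transversality of the limiting directions, then trap the cores inside the limit sets---is the same as the paper's, and your disjointness argument via crossing chords in a small flat disk is essentially the paper's (limits of disjoint curves cannot acquire transverse intersections, and distinct directions force any intersection to be transverse). But the containment step, which you correctly single out as the crux, has a genuine gap: your justification rests on the claim that the adjacent components of the direction-$\theta^j$ decomposition are ``positively separated'' from $Z^j$. They are not. Adjacent components of $\mathcal F_{\theta^j}$ meet $Z^j$ along shared boundary saddle connections, so every $\epsilon$--neighborhood $N_\epsilon(Z^j)$ contains a collar lying inside the complement $W^j = \overline{S_0 - Z^j}$. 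Hausdorff convergence only forces $a_n^j$ into $N_\epsilon(Z^j)$; it does not keep it out of that collar. The paper closes this by analyzing $U_\epsilon = N_\epsilon(Z^j) - Z^j$ explicitly: it is a union of width-$\epsilon$ rectangles foliated in direction $\theta^j$ together with sectors at the punctures whose angles are integral multiples of $\pi$, and a geodesic in direction $\theta_n^j \neq \theta^j$ that enters such a rectangle must cross it and exit $N_\epsilon(Z^j)$ (it cannot escape through a puncture since the metric is fully punctured). That is precisely the argument your sketch defers and does not supply.

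Two smaller omissions. First, you assert that $Z^j$ is the closure of a single component of $\mathcal F_{\theta^j}$; a priori the Hausdorff limit is only a union of minimal components and sub-cylinders together with finitely many additional saddle connections, and one must still rule out a saddle connection protruding from the subsurface part. The paper does this \emph{after} the containment claim: such a saddle connection would be approximated by an arc of $a_n^j$ that is contained in it, forcing $\theta_n^j = \theta^j$ and contradicting the distinctness of the directions. Second, the lemma asserts the limit is independent of the choice of core geodesic; the paper secures this by taking the Hausdorff limit of the closed cylinders $\bar A_n^j$ and observing that their widths tend to zero, so all core geodesics have the same limit---a point that follows from your length-to-infinity observation and finiteness of area, but which your write-up never makes.
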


\begin{proof}  Because the cylinders are distinct, we may pass to a subsequence so that $\{A_n\}$ all pairwise intersect nontrivially.
 
Let $\theta_n^1, \cdots ,\theta_n^{q_0} \in \mathbb R \mathbb P^1$ be the equally spaced directions of $A_n^1,\ldots,A_n^{q_0}$, respectively.
Pass to a subsequence so that for each $j$, $\{\theta_n^j\}$ is a set of distinct directions and $\theta_n^j \to \theta^j$.  Then $\theta^1,\ldots,\theta^{q_0} \in \mathbb R \mathbb P^1$ are also equally spaced.  Pass to a further subsequence so that the Hausdorff limit $Z^j$ of the closure $\bar{A}_n^j$ exists for each $j$. The covering group $G$ of $p \colon S_0 \to S$ cyclically permutes the set $\{Z^1,\ldots,Z^{q_0}\}$.

Each $Z^j$ is a closed subset of $S_0$ which is a union of geodesics in direction $\theta^j$ that are either closed, or extend maximally in both directions (either as an infinite ray, or a segment limiting to a deleted cone point).  This follows from the fact that each point $x \in Z^j$ is a limit of points in $x_n \in \bar A_n^j$, and the closed core geodesic of $\bar A_n^j$ (or saddle connection in the boundary) through $x_n$ converge to the geodesic in direction $\theta^j$ through $x$.  Since $\bar A_n^1,\ldots,\bar A_n^{q_0}$ are pairwise disjoint, the core geodesics in distinct cylinders from this set have no transverse intersection points.  It follows that geodesics of $Z^1,\ldots,Z^{q_0}$ can have no transverse intersection points.  Because these geodesics are in different directions, this means that $Z^1,\ldots,Z^{q_0}$ are pairwise disjoint.  Since the widths of the cylinders---that is, the distance between their boundary components---tends to zero (by area considerations since the lengths are tending to infinity) $Z^j$ is also a limit of any choice of core geodesic of $A_n^j$. %new

Let $\mathcal F_{\theta}$ denote the foliation of $(S_0,q_0)$ in direction $\theta$.  This foliation decomposes into a finite union of maximal cylinders foliated by simple closed geodesics and minimal components; that is, foliated subsurfaces in which each leaf is dense; this well-known fact follows, for example, by passing to measured geodesic laminations via straightening (see \cite{Levitt}) and applying the classification of geodesic laminations into components (see \cite{CB}), though a direct proof can also be obtained from the discussion of the {\em unglue} in \cite[Expos\'e 9]{FLP}.  We refer to both the maximal cylinders and the minimal components as the components of $\mathcal F_\theta$.  From the previous paragraph, it follows that $Z^j$ must be a union of components of $\mathcal F_{\theta^j}$: a priori it is a union of minimal components and subcylinders, but observe that the boundary of the limits must be a union of limits of boundary arcs of $\bar A_n^j$ and therefore must contain cone points (so cylinders in the limit are necessarily maximal).
 %new 
%\js{I didn't change anything, the sub-cylinder question still seems unnecessary to me, now updated to say maximal cylinder}
%\cl{I added in a short argument..}

\begin{claim} For each $j$ and $n$ sufficiently large, $a_n^j \subset Z^j$.
\end{claim}

\begin{proof}
If $Z^j = S_0$ for any $j$, then $q_0 = 1$, and there is nothing to prove, so we assume that $Z^j \neq S_0$.
Fix any $j$ and let $W^j$ denote the closure of the complement $W^j = \overline{S_0 - Z^j}$.  This is a subsurface foliated in direction $\theta^j$ and $Z^j \cap W^j$ is a finite union of saddle connections (in direction $\theta^j$).
Hausdorff convergence implies that for any $\epsilon > 0$, $a_n^j$ is contained in the $\epsilon$--neighborhood $N_\epsilon(Z^j)$ of $Z^j$ for $n$ sufficiently large.  For $\epsilon > 0$ sufficiently small, $U_\epsilon^j = N_\epsilon(Z^j) - Z^j$ is a union of rectangles of width $\epsilon$ together with a (possibly empty) union of sectors of the $\epsilon$--neighborhood of the punctures (note that the union of some of the rectangles making up $U_\epsilon^j$ may form a cylinder, but this does not affect the argument).  These sectors have angles which are integral multiples of $\pi$; See Figure~\ref{F:epsilon-neighborhood}. From this we easily see that if $a_n^j$ meets $U_\epsilon^j$, it must leave $N_\epsilon(Z^j)$, which is impossible.  Therefore, when $a_n^j$ is in $N_\epsilon(Z^j)$, then it is in fact contained in $Z^j$, as required.
\end{proof}

\begin{figure}[ht]
\begin{tikzpicture}[scale=1.1]

\draw[thick] (-2,0) -- (2,0);
\fill [opacity=.2] (-2,0) -- (2,0) -- (2,.2) -- (-2,.2) -- cycle;
\fill [opacity=.5] (0,0) -- (2,0) -- (2.5,-1) -- (1,-1) -- cycle;
\fill [opacity=.6] (0,0) -- (-2,0) -- (-2.5,-1) -- (-1,-1) -- cycle;
\draw[fill] (0,-.7) circle (0.02);
\draw[fill] (-.4,-.6) circle (0.02);
\draw[fill] (.4,-.6) circle (0.02);
\draw[fill] (-.18,-.67) circle (0.02);
\draw[fill] (.18,-.67) circle (0.02);
\draw[thin] (-2,.05) -- (2,.05);
\draw[thin] (-2,.1) -- (2,.1);
\draw[thin] (-2,.15) -- (2,.15);
\draw[thin] (-2,.2) -- (2,.2);
\draw[thin] (-2,.25) -- (2,.25);
\draw[thin] (-2,.3) -- (2,.3);
\draw[thin] (-2,.35) -- (2,.35);
\draw[thin] (-2,.4) -- (2,.4);
\draw[thin] (-2,.45) -- (2,.45);
\draw[thin] (-2,.5) -- (2,.5);
\draw[fill,white] (0,0) circle (0.04);
\draw (0,0) circle (0.04);

\fill [opacity=.2] (5,0) circle (.2);
\fill [white] (5,0) -- (7,0) -- (7,.2) -- (5,.2) -- cycle;
\fill [opacity=.2] (5,0) -- (7,0) -- (7,.2) -- (5,.2) -- cycle;
\draw[thin] (4,-.05) -- (7,-.05);
\draw[thin] (4,-.1) -- (7,-.1);
\draw[thin] (4,-.15) -- (7,-.15);
\draw[thin] (4,-.2) -- (7,-.2);
\draw[thin] (4,-.25) -- (7,-.25);
\draw[thin] (4,-.3) -- (7,-.3);
\draw[thin] (4,.05) -- (7,.05);
\draw[thin] (4,.1) -- (7,.1);
\draw[thin] (4,.15) -- (7,.15);
\draw[thin] (4,.2) -- (7,.2);
\draw[thin] (4,.25) -- (7,.25);
\draw[thin] (4,.3) -- (7,.3);
\draw[thin] (4,.35) -- (7,.35);
\draw[thin] (4,.4) -- (7,.4);
\draw[thin] (4,.45) -- (7,.45);
\draw[thin] (4,.5) -- (7,.5);
\fill [white] (3,-.5) -- (4.5,-.5) -- (5,0) -- (7,0) -- (7.5,-1) -- (2.5,-1) -- cycle;
\fill [opacity=.5] (5,0) -- (7,0) -- (7.5,-1) -- (6,-1) -- cycle;
\draw[thick] (4,0) -- (7,0);
\draw[fill] (4.33,-.37) circle (0.02);
\draw[fill] (4.46,-.49) circle (0.02);
\draw[fill] (5,-.7) circle (0.02);
\draw[fill] (4.6,-.6) circle (0.02);
\draw[fill] (5.4,-.6) circle (0.02);
\draw[fill] (4.82,-.67) circle (0.02);
\draw[fill] (5.18,-.67) circle (0.02);
\draw[fill,white] (5,0) circle (0.04);
\draw (5,0) circle (0.04);

\end{tikzpicture}
\caption{Part of the picture of $N_\epsilon(Z^j)$ near a cone point.  The dark shaded portion is in $Z^j$, while $U_\epsilon^j = N_\epsilon(Z^j) - Z^j$ is the lightly shaded part.  The foliation of $W^j$ is shown horizontally.  The ellipses signify part of the surface that has been omitted.}
\label{F:epsilon-neighborhood}
\end{figure}
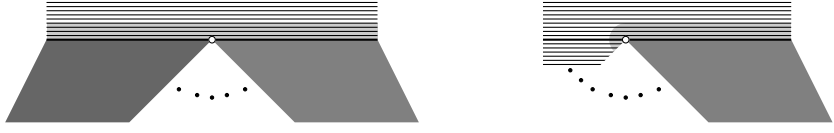

Now observe that since the directions $\{\theta^j_n\}$ are distinct, $a_n^j \subset Z^j$ for sufficiently large $n$, and $\{a_n^j\}$ Hausdorff converges to $Z^j$, it follows that $Z^j_0 = Z^j$.  That is, every saddle connection in direction $\theta^j$ contained in $Z^j$ is already in the subsurface portion $Z^j_0$: otherwise, some saddle connection $\sigma$ would be approximated by an arc of $a_n^j$ for large $n$, and if the saddle connection were not contained in $Z^j_0$, then an arc of $a_n^j$ would be {\em contained in} $\sigma$, contradicting the distinctness of the directions $\{\theta_n^j\}$.  Since $Z^j = Z^j_0$ is a subsurface foliated in direction $\theta^j$, we are done.
\end{proof}

\begin{proof}[Proof of Theorem~\ref{T:main}.] 
Suppose $\{A_n\}$ is a sequence of embedded cylinders in $(S,\varphi)$ so that the sequence of core curves $\{a_n\}$ has infinite diameter in $\C(S)$.  In particular, we may assume that the cylinders are all distinct.
Let $p \colon (S_0,\varphi_0) \to (S,\varphi)$ be the holonomy almost trivializing cover and let $Z^1,\ldots Z^{q_0}$ be the foliated subsurfaces which are Hausdorff limits of core curves $a_n^1,\ldots,a_n^{q_0}$ of $A_n^1,\ldots,A_n^{q_0}$ as in Lemma~\ref{L:limits of cores} (after having passed to a subsequence, if necessary).  Fix an essential simple closed curve $b$ in $Z^2$.

Rafi and Schleimer in \cite{RafSch1} showed that covering maps induce quasi-isometric embeddings between curve complexes. Therefore $\{\widetilde a_n^1\}$ also has infinite diameter in $\C(S_0)$.  %new
It follows that for $n$ large enough, $a_n^1$ must nontrivially intersect $b$.  But then the Hausdorff limit of the $a_n^1$, which is contained in $Z^1$, necessarily intersects $b$ (though not necessarily transversely).   This is impossible, since $b$ is contained in $Z^2$ and is thus disjoint from $Z^1$.  This contradiction proves that $\{a_n\}$ must have finite diameter, as required.
\end{proof}

Lemma~\ref{L:limits of cores} is also the key tool in proving that when $\EC(\varphi)$ {\em does} have infinite diameter, then it necessarily comes from the kind of construction in Theorem~\ref{T:6 flat construction}, or more generally the one described after its proof.

%As the next theorem illustrates, when $\EC(\varphi)$ has infinite diameter, $\varphi$ is very nearly a semi-translation structure; see Section \ref{S:slit construction} for the definition of unglue-reglue.
% is entirely accounted for by "examples of the type" constructed in that section. %new hmmm, do we just use a different word here?
%\js{"discrepancy" and "examples of the type" need to be clarified}

\begin{theorem} \label{T:the only way} Suppose $\varphi$ is a flat metric on a surface $S$ with holonomy of finite order $q \geq 1$ and $\EC(\varphi)$ has infinite diameter in $\C(S)$.  Then there is a semi-translation structure $\hat \varphi$ so that $\varphi$ is obtained from it by disk replacements.
If in addition $\EC(\varphi)$ accumulates on a point $\mathcal F \in \FF(S) = \partial \C(S)$, then we can assume $\hat \varphi$ has vertical foliation supported on $\mathcal F$.
%$\varphi$ is obtained from a semi-translation structure $\hat \varphi$ with vertical foliation $\mathcal F$ by unglue-reglue.
% $\varphi$ is obtained from some semi-translation structure $\hat \varphi$ by a cone point blow up.  If in addition $\EC(\varphi)$ accumulates on a point in the Gromov boundary, then we can take this to be a filling cone point blow up.
% there is a subsurface $Z \subset S$ with a singular foliation by geodesics so that the complement is a collection of disks.
\end{theorem}

%In this theorem, the surface $Z$ may not be embedded, but only fails to be so at a finite number of cone points. That is, there is a surface with boundary and map $Z \to S$ which is injective except possibly at a finite number of points in $\partial Z$ which all map to cone points.

%\noindent {\bf Remark.} We are not claiming that any cone point blow up produces $\varphi$ for which $\EC(\varphi)$ has infinite diameter.  See Question~\ref{Q:is cone point blow up enough} below.

% that $\{A_n\}$ is a collection of maximal cylinders for which the set of core curves $\{a_n\}$
%In this theorem, the surface $Z$ may not be embedded, but only fails to be so at a finite number of cone points.  That is, there is a surface with boundary $Z$ and map $Z \to S$ so that $Z$ is injective except possibly at a finite number of points in $\partial Z$ which all map to cone points.

\begin{proof}  Let $\{a_n\} \subset \EC(\varphi)$ be a sequence such that no subsequence has finite diameter in $\C(S)$.  Furthermore, if $\EC(\varphi)$ accumulates on a point $\mathcal F \in \FF(S) = \partial \C(S)$, we assume $a_n \to \mathcal F$, as $n \to \infty$.

Fully puncture $(S,\varphi)$, denote it by $(S^\circ,\varphi^\circ)$, and let $p \colon (S_0,\varphi_0) \to (S^\circ,\varphi^\circ)$ denote the holonomy almost trivializing cover.   Then let $A_n^1,\ldots,A_n^{q_0}$, $Z^1,\ldots,Z^{q_0}$, $\theta^1,\ldots,\theta^{q_0}$ be as in Lemma~\ref{L:limits of cores}, and let us choose $a_n^1,\ldots,a_n^{q_0}$ to be core geodesics for the cylinders so that $a_n^j \to Z^j$ with the foliation in direction $\theta^j$ (having already passed to a subsequence).  Without loss of generality, the directions $\theta_n^j$ of $A_n^j$ are distinct, for all $n$ and $j$ and $a_n^j \subset Z^j$ (by passing to some tail of our subsequence).

Since the $Z^1,\ldots,Z^{q_0}$ are pairwise disjoint and cyclically permuted by the covering group $G$, $p$ restricts to a homeomorphism from each $Z^j$ to  $Z^\circ \subset S^\circ$, a closed subsurface, and maps the foliation by geodesics in direction $\theta^j$ to a foliation $\mathcal F^\circ_0$ of $Z^\circ$ by geodesics: Note that any two of these homeomorphisms differ by an element of the covering group, and thus $(Z^\circ,\mathcal F^\circ_0)$ is independent of $j$.  We also note that $Z^\circ$ is the Hausdorff limit of the closures $\{\bar A_n \cap S^\circ\}$ and that the core geodesics of $A_n$ limit to the geodesics in the foliation $\mathcal F^\circ_0$.  Let $Z = \bar Z^\circ \subset S$, and observe that this is an embedded subsurface, except possibly at the cone points, also with a (singular) foliation $\mathcal F_0$ by limits of the core curve $a_n$ of $A_n$.   Moreover, since $a_n^j \subset Z^j$, it follows that $a_n \subset Z$.  Since $\{a_n\}$ is assumed to have infinite diameter in $\C(S)$, the boundary of $Z$ must be homotopically inessential in $S$---otherwise the boundary would be an essential curve disjoint from all $a_n$, proving $\{a_n\}_{n > N}$ has diameter $2$ in $\C(S)$.  But then, the complement of $Z$ is a collection of disks, each containing at most one puncture (possibly on its boundary).  

For each complementary (possibly once-punctured) disk, we now describe how to glue the boundary arcs together, thus ``collapsing" $Z$ onto $S$, and producing a semi-translation structure $\hat \varphi$ for which $\varphi$ is therefore obtained by disk replacements.  First, fix a component without a puncture in its interior.  Observe that the boundary is a union of finitely many geodesic segments connecting points for which the total angle inside $Z$ has the form $k \pi$ for some integer $k \geq 2$, together with at most one puncture; call these the {\em corners} of the disk.

Let the geodesic arcs be denoted $\alpha_1,\ldots,\alpha_n$ and let $\ell_j$ be the length of $\alpha_j$.  Since each arc is a geodesic, we must have
\begin{equation} \label{E:gluing eqn}
\ell_j \leq \sum_{i \neq j} \ell_i 
\end{equation}
for all $j$.  To see this, note that otherwise some $\ell_j$ is greater than the sum of the remaining lengths, in which case the concatenation of these remaining arcs is a path of shorter length than a geodesic which is homotopic to the geodesic via the disk, a contradiction (if there's a puncture, it's necessarily on the boundary by assumption, and we can do a small perturbation to avoid it and produce the same contradiction).  Therefore the inequality \eqref{E:gluing eqn} holds.

After reindexing, suppose $\ell_1 \geq \ell_i$ and $\ell_2 \leq \ell_i$, for all $i$.  For $0 < t \leq \ell_2/2$, we can isometrically ``fold together" arcs of length $t$ emanating from any corner.  Doing this at each corner we have reduced the lengths of each arc by $2t$, so that the lengths are now $\ell_j - 2t$, for each $j$.  If there is some $0 < t_0 \leq \ell_2/2$ for which
\[ \ell_1 - 2t_0 = \sum_{i\neq 1} (\ell_i - 2t_0),\]
then perform the fold for this choice of $t_0$, and we can isometrically glue the remaining portion of $\alpha_1$ of length $\ell_1-2t_0$ to the union of the others, and we have produced the required collapse.  If there is no such $t_0$, then we perform the fold with $t = \ell_2/2$, so that the entire side $\alpha_2$ has been glued (others may have been glued as well).  We now have a new disk {\em with fewer sides}, and since $\alpha_1$ was the longest, our assumption about the nonexistence of $t_0$ implies that the new lengths also satisfy \eqref{E:gluing eqn} for all indices $j$ (in our new index set).  Continuing inductively, all arcs of the disk are glued as required to produce the metric $\hat \varphi$ on $S$.

If there is a puncture in the interior of the disk, and inequality \eqref{E:gluing eqn} holds, we carry out the procedure just described, placing the puncture at the final point of the gluing.  If \eqref{E:gluing eqn} does not hold, then assuming $\alpha_1$ is longest side again, push the puncture onto the midpoint of that side.  This subdivides $\alpha_1$ into $\alpha_1'$ and $\alpha_1''$, each of half the length, $\ell_1' = \ell_1'' = \ell_1/2$.  Now observe that \eqref{E:gluing eqn} holds for the new lengths $\ell_1',\ell_1'',\ell_2,\ldots,\ell_n$, and we can proceed as above.

We have therefore produced a metric $\hat \varphi$ foliated by $\mathcal F_0$, hence defining a semi-translation structure, for which $\varphi$ is obtained by disk replacements.  Finally, if we were in the situation that $\{a_n\}$ accumulated on $\mathcal F \in \FF(S) = \partial \C(S)$, then since $\mathcal F$ is filling, we must have $\mathcal F_0 = \mathcal F$ by Theorem~\ref{T:Klarreich}.  The proof of this last equality requires a bit more care, and one way to see it is as follows.  Straightening all curves and foliations to geodesics and geodesic laminations (with respect to a fixed hyperbolic structure), we see that the geodesic representatives $a_n^*$ of $a_n$ must have a subsequence that Hausdorff converges to a lamination having no transverse intersections with the straightening of $\mathcal F_0$.  On the other hand, any Hausdorff accumulation point of $a_n^*$ must also contain the straightening of $\mathcal F$ (see \cite{HamBoun,Bom}), and the straightenings of $\mathcal F$ and $\mathcal F_0$ have no transverse intersections.  Since $\mathcal F$ is filling, it follows that $\mathcal F_0 = \mathcal F$.
\end{proof}

Finally, we note that following similar ideas to the proof of Theorem~\ref{T:main}, we can prove the following about the set of embedded saddle connections, $\ESC(\varphi)$ for a flat metric with holonomy of order at least $3$.

\begin{theorem} \label{T:ESC} Let $\varphi$ be a fully punctured flat metric on a surface $S$ with finite holonomy of order $q \geq 3$.  Then $\ESC(\varphi)$ has finite diameter in $\A(S)$.
\end{theorem}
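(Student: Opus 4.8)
The plan is to adapt the proof of Theorem~\ref{T:main} almost verbatim, replacing the curve complex $\C(S)$ by the arc graph $\A(S)$ and replacing closed core geodesics by embedded saddle connections. First I would suppose, for contradiction, that $\{\sigma_n\}$ is a sequence of distinct embedded saddle connections in $(S,\varphi)$ whose diameter in $\A(S)$ is infinite. As in Lemma~\ref{L:limits of cores}, I would pass to the holonomy almost trivializing cover $p\colon (S_0,\varphi_0)\to (S,\varphi)$ of degree $q_0$ and consider the preimages $\sigma_n^1,\ldots,\sigma_n^{q_0}$, which are saddle connections in $q_0$ equally spaced directions $\theta_n^1,\ldots,\theta_n^{q_0}$. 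Because $\varphi_0$ is fully punctured and the directions are distinct, the closures of the lifts are pairwise disjoint, exactly as in the cylinder case.

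Next I would establish the analogue of Lemma~\ref{L:limits of cores} for saddle connections: after passing to a subsequence so that each $\{\theta_n^j\}$ converges to an equally spaced limit direction $\theta^j$, the Hausdorff limits $Z^j$ of $\sigma_n^j$ exist and are pairwise disjoint closed subsurfaces, each foliated in direction $\theta^j$. The argument is essentially the one already given: limits of geodesics in a fixed direction are geodesics in the limiting direction, the distinctness of directions in distinct $Z^j$ forces disjointness, and the $\epsilon$--neighborhood argument of the Claim shows that each $\sigma_n^j$ is eventually contained in $Z^j$. One point requiring slight care is that a saddle connection has its endpoints at punctures rather than being closed, so I would note that the endpoints escape to the deleted cone points and that the $\epsilon$--neighborhood picture near a cone point (Figure~\ref{F:epsilon-neighborhood}) accommodates this.

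With the subsurfaces $Z^1,\ldots,Z^{q_0}$ in hand, I would finish exactly as in Theorem~\ref{T:main}. Fix an essential arc $b$ in $Z^2$. Using the main result of \cite{RafSch1} (which relates distances upstairs and downstairs), infinite diameter of $\{\sigma_n\}$ in $\A(S)$ forces $\{\sigma_n^1\}$ to have infinite diameter in $\A(S_0)$, so for large $n$ the arc $\sigma_n^1$ must intersect $b$ essentially. But then the Hausdorff limit of $\sigma_n^1$, which lies in $Z^1$, meets $b\subset Z^2$, contradicting the disjointness of $Z^1$ and $Z^2$. This contradiction shows $\ESC(\varphi)$ has finite diameter.

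The main obstacle I anticipate is not the scheme itself, which transports cleanly, but rather verifying that the technical inputs developed for cylinder cores apply verbatim to saddle connections. Specifically, the $\epsilon$--neighborhood argument and the identification $Z^j=Z^j_0$ (that the limit is genuinely a foliated subsurface with no extra isolated saddle connections) used the fact that core geodesics are \emph{closed}; for saddle connections I must confirm that the same distinctness-of-directions argument rules out a limiting saddle connection containing a subarc of $\sigma_n^j$, and that \cite{RafSch1} or the analogous comparison applies to the arc graph rather than only the curve complex. If the cited result is stated only for the curve complex, I would instead appeal to the standard quasi-isometric comparison between $\A(S)$ and $\C(S)$ (away from low-complexity cases) to transfer the infinite-diameter hypothesis, handling the sporadic surfaces separately.
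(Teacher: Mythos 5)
Your setup and the saddle-connection analogue of Lemma~\ref{L:limits of cores} match the paper, which indeed asserts that the proof of that lemma "can be carried out verbatim"; the points of care you flag (endpoints at punctures, the $\epsilon$--neighborhood picture, ruling out isolated limiting saddle connections via distinctness of directions) are the right ones. The problem is your endgame. The result of \cite{RafSch1} is a statement about curve complexes, not arc graphs, so it does not directly give you that infinite diameter of $\{\sigma_n\}$ in $\A(S)$ forces infinite diameter of $\{\sigma_n^1\}$ in $\A(S_0)$ --- and note that this is the \emph{hard} direction of the covering comparison (you need distances upstairs to be bounded below in terms of distances downstairs). Your proposed fallback is worse: $\A(S)$ and $\C(S)$ are \emph{not} quasi-isometric in general, even away from sporadic surfaces. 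Distance in the arc graph is coarsely controlled by subsurface projections to witnesses (essential subsurfaces meeting every essential arc, equivalently whose complement contains no puncture), and whenever a proper witness $W$ exists --- as it does for the fully punctured surfaces in this theorem --- powers of a partial pseudo-Anosov supported on $W$ move an arc arbitrarily far in $\A(S)$ while staying a bounded distance from $\partial W$ in $\C(S)$. This is consistent with the fact that the Gromov boundaries of $\A(S)$ and $\C(S)$ differ (see \cite{Bom}). So as written, the final step of your argument has no valid input.

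The paper avoids this entirely by finishing \emph{downstairs}, and the fix is short. Since $q_0 \geq 2$, the subsurfaces $Z^1,\ldots,Z^{q_0}$ are pairwise disjoint and foliated in distinct directions, so they cannot cover $S_0$; since $p$ restricts to a homeomorphism $Z^j \to Z \subset S$, it follows that $Z \neq S$, and hence $\partial Z$ contains a saddle connection $\sigma$. This $\sigma$ is itself an (essential) vertex of $\A(S)$. Because $\sigma_n$ Hausdorff converges to $Z$ and is eventually contained in $Z$, the arc $\sigma_n$ is disjoint from $\sigma$ (away from endpoints) for all large $n$, so the tail of the sequence lies within distance $1$ of the fixed vertex $\sigma$ in $\A(S)$ --- contradicting infinite diameter with no covering-space comparison needed. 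This is precisely where the arc-graph setting is \emph{easier} than the curve-graph setting of Theorem~\ref{T:main}: there the boundary of $Z$ may be an inessential curve, which forces the upstairs argument via \cite{RafSch1}, whereas a boundary saddle connection is always an essential arc.
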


\begin{proof} Suppose there exists a sequence $\{a_n\} \subset \ESC(\varphi)$ so that the distance from $a_n$ to some point of $\A(S)$ tends to infinity.  Pass to the holonomy almost trivializing cover $p \colon (S_0,\varphi_0) \to (S,\varphi)$, let $a_n^1 \cup \ldots \cup a_n^{q_0} = p^{-1}(a_n)$ be the preimage set of pairwise disjoint embedded saddle connections.  If $\theta_n^j$ is the direction of $a_n^j$, then after passing to a subsequence we may assume $\theta_n^j \to \theta^j$ and that $a_n^j$ Hausdorff converges to a closed subset $Z^j$, for each $j = 1,\ldots,q_0$.  The proof of the lemma can be carried out verbatim to show that $Z^1,\ldots,Z^{q_0}$ consists of pairwise disjoint embedded subsurfaces foliated in equally spaced directions $\theta^1,\ldots,\theta^{q_0}$. 

As in the proof of Theorem~\ref{T:the only way}, $p$ restricts to a homeomorphism $Z^j \to Z \subset S$, for any component $Z^j$.  Because $Z^1,\ldots,Z^{q_0}$ is not the entire surface $S_0$ (they are disjoint and foliated in different directions) it follows that $Z \neq S$.  In particular, there is a saddle connection in the boundary of $Z$.  Again as in the proof of Theorem~\ref{T:the only way}, we find that $a_n$ Hausdorff converges to $Z$ and is eventually contained in $Z$ for $n$ large enough.  Consequently, the saddle connection in the boundary of $Z$ is disjoint from $a_n$, for large enough $n$, contradicting the fact that the distance to some point of $\A(S)$ tends to infinity.
\end{proof}

%\cl{maybe say another sentence or two at the end of each proof...}

%%%%%%%%%%%%%%%%%%%%%%%%%%%%%%%%%%%%%%%%%%%%%%%%%%%%%%%%

\section{Questions}

The theorems and examples of this paper suggest a number of questions.  We list a couple of them here.

An affirmative answer to the following question would provide a quantitative version of Theorem~\ref{T:main}.
\begin{question} Given a fully punctured flat metric $\varphi$ with holonomy of finite order $q \geq 3$, is there a bound on the diameter of $\EC(\varphi)$ in $\C(S)$ that depends only $q$ and $S$?  Is there a bound on the diameter of $\ESC(\varphi)$ in $\A(S)$ that depends only on $q$ and $S$?
\end{question}
The examples in Section~\ref{S:qBound} show that if there is a bound on the diameter of $\EC(\varphi)$, it must necessarily depend on $q$.\\

The cylinder curves in the example from Section~\ref{S:slit construction} limit to a single point on the Gromov boundary of the curve complex.
\begin{question} Do there exist flat metrics $\varphi$ with holonomy of finite order $q \geq 3$ so that $\EC(\varphi) \subset \C(S)$ accumulates on more than one point in the Gromov boundary?
\end{question}
The proof of Theorem~\ref{T:the only way} picks out a subsurface (with disk complementary components) and foliation when $\EC(\varphi)$ has infinite diameter, suggesting that the answer to this question is negative.  

\begin{question} \label{Q:is cone point blow up enough} If $\varphi$ is obtained from $\hat \varphi$ by a disk replacement, does $\EC(\varphi)$ have infinite diameter?  If not, does it at least contain infinitely many distinct curves?
\end{question}

%The Gromov boundary of $\A(S)$ is also a space of foliations; see Pho-On \cite{Bom}, and we ask the following related question.
%\begin{question} Do there exist flat metrics $\varphi$ with holonomy of finite order $q \geq 3$ so that $\ESC(\varphi) \subset \A(S)$ accumulates on more than one point in the Gromov boundary?
%\end{question}  %new

%\js{Removed question 3}

\bibliography{qdiff}

\end{document}